\tikzset{
	oriented WD/.style={
			every to/.style={out=0,in=180,draw},
			label/.style={
					font=\everymath\expandafter{\the\everymath\scriptstyle},
					inner sep=0pt,
					node distance=2pt and -2pt},
			semithick,
			node distance=1 and 1,
			decoration={markings, mark=at position .5 with {\arrow{stealth};}},
			ar/.style={postaction={decorate}},
			execute at begin picture={\tikzset{
						x=\bbx, y=\bby,
						every fit/.style={inner xsep=\bbx, inner ysep=\bby}}}
		},
	bbx/.store in=\bbx,
	bbx = 1.5cm,
	bby/.store in=\bby,
	bby = 1.75ex,
	bb port sep/.store in=\bbportsep,
	bb port sep=2,
	bb port length/.store in=\bbportlen,
	bb port length=4pt,
	bb min width/.store in=\bbminwidth,
	bb min width=1cm,
	bb rounded corners/.store in=\bbcorners,
	bb rounded corners=2pt,
	bb small/.style={bb port sep=1, bb port length=2.5pt, bbx=.4cm, bb min width=.4cm, bby=.7ex},
	bb/.code 2 args={
			\pgfmathsetlengthmacro{\bbheight}{\bbportsep * (max(#1,#2)+1) * \bby}
			\pgfkeysalso{draw,minimum height=\bbheight,minimum width=\bbminwidth,outer sep=0pt,
				rounded corners=\bbcorners,thick,
				prefix after command={\pgfextra{\let\fixname\tikzlastnode}},
				append after command={\pgfextra{\draw
							\ifnum #1=0{} \else foreach \i in {1,...,#1} {
								($(\fixname.north west)!{\i/(#1+1)}!(\fixname.south west)$) +(-\bbportlen,0) coordinate (\fixname_in\i) -- +(\bbportlen,0) coordinate (\fixname_in\i')}\fi
							\ifnum #2=0{} \else foreach \i in {1,...,#2} {
								($(\fixname.north east)!{\i/(#2+1)}!(\fixname.south east)$) +(-\bbportlen,0) coordinate (\fixname_out\i') -- +(\bbportlen,0) coordinate (\fixname_out\i)}\fi;
						}}}
		},
	bb name/.style={append after command={\pgfextra{\node[anchor=north] at (\fixname.north) {#1};}}}
}
\NewDocumentCommand{\vxy}{o m}{
	\IfNoValueTF{#1}
	{\vcenter{\xymatrix{#2}}}
	{\vcenter{\xymatrix#1{#2}}}
}
\NewDocumentCommand{\aMre}{o}{
	\IfNoValueTF{#1}
	{\mathsf{s}\Mre}
	{\mathsf{s}^{#1}\Mre}
}
\def\id{\text{id}}
\def\un{\sfu}
\DeclareMathOperator{\forg}{U}
\DeclareMathOperator{\free}{F}
\definecolor{refkey}{HTML}{ADD8E6}
\definecolor{labelkey}{HTML}{ADD8E6}
\def\h{\textsf{head}}
\def\kons{\mathbin{::}}
\def\concat{\mathbin{\scalebox{.8}{$+\kern-.2em+$}}}
\def\mto{\rightarrowtriangle}
\newcommand{\Pageref}[1]{p.~\pageref{#1}}
\newcommand{\deferredRef}[1]{The proof is deferred to the appendix, \Pageref{proof_of_#1}}
\NewDocumentCommand{\inMe}{m m m}{\mathop{\dsdtstile{#2,#3}{#1}}}
\NewDocumentCommand{\inMo}{m m m}{\mathop{\dsststile{#2,#3}{#1}}}
\NewDocumentCommand{\LinMe}{O{A} O{B} m m m}{\bsmat #1\esmat\dsdtstile{#4,#5}{#3}\bsmat #2\esmat}
\NewDocumentCommand{\LinMo}{O{A} O{B} m m m}{\bsmat #1\esmat\dsststile{#4,#5}{#3}\bsmat #2\esmat}
\newcommand{\xyadj}[4]{\xymatrix{
#1 \ar@<.5em>[r]^-{#3}\ar@{}[r]|-\perp&\ar@<.5em>[l]^-{#4} #2
}}
\newcommand{\inlinexyadj}[4]{\xymatrix{
		#3 : #1 \ar@<.33em>[r] \ar@{}[r]|-\perp&\ar@<.33em>[l] #2 : #4
	}}
\def\mlyExt{{\text{e}+}}
\def\mreExt{{\text{e}*}}
\def\baseRepo{https://github.com/iwilare/categorical-automata}
\def\blobRepo{\baseRepo/blob/main/}
\newcommand{\Href}[2]{\href{#1}{\textsf{#2}}}
\newcommand{\agda}[1]{%
	\hspace{0.07em}%
	\Href{\blobRepo #1}{(\raisebox{-0.2em}{%
			\includegraphics[height=1em]{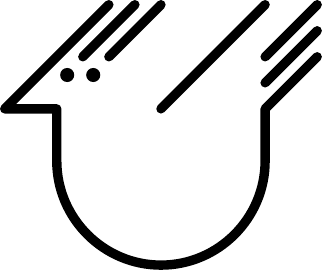}%
		})%
		\hspace{0.20em}}%
}
\newcommand{\agdasymbol}[1]{%
	\hspace{0.07em}%
	\Href{\baseRepo}{\raisebox{-0.2em}{%
			\includegraphics[height=1em]{agda}%
		}%
		\hspace{-0.40em}%
  }
}
\newcommand{\email}[1]{\href{mailto:#1}{\tt #1}}
\begin{document}
\title{The semibicategory of Moore automata}
\author[1]{Guido \textsc{Boccali}}
\author[2]{Bojana \textsc{Femi\'c}}
\author[3]{Andrea \textsc{Laretto}}
\author[3]{Fosco \textsc{Loregian}}
\author[4]{Stefano \textsc{Luneia}}
\affil[1]{Università di Torino, Torino, Italy\newline \email{guidoboccali@gmail.com}}
\affil[2]{Serbian Academy of Sciences and Arts, Belgrade, Serbia\newline \email{femicenelsur@gmail.com}}
\affil[3]{Tallinn University of Technology, Tallinn, Estonia\newline \email{anlare@ttu.ee},\email{folore@ttu.ee}}
\affil[4]{Università di Bologna, Bologna, Italy\newline \email{stefano.luneia@gmail.com}}
\maketitle

\begin{abstract}
	We study the semibicategory $\Mre$ of \emph{Moore automata}: an arrangement of objects, 1- and 2-cells which is inherently and irredeemably nonunital in dimension one.

 Between the semibicategory of Moore automata and the better behaved bicategory $\Mly$ of \emph{Mealy automata} a plethora of adjunctions insist: the well-known essential equivalence between the two kinds of state machines that model the definitions of $\Mre$ and $\Mly$ is appreciated at the categorical level, as the equivalence induced between the fixpoints of an adjunction, in fact exhibiting $\Mre(A,B)$ as a coreflective subcategory of $\Mly(A,B)$; the comodality induced by this adjunction is but the $0$th step of a `level-like' filtration of the bicategory $\Mre$ in a countable family of essential bi-localizations $\aMre[n]\subseteq\Mre$. We outline a way to generate intrinsically meaningful adjunctions of this form. We mechanize some of our main results using the proof assistant \texttt{Agda}.
\end{abstract}

\section{Introduction}\label{sec_intro}
Any arrangement of sets and functions $E \xot d A\times E, E\xto s B$
can be interpreted as follows:
\begin{itemize}
	\item $E$ as a set of `states', and $A,B$ are sets of `inputs' and `outputs';
	\item the function $d : E\times A \to E$ is a \emph{transition function} (specifying a \textbf{d}ynamics), and the function $s : E\to B$ provides an \emph{output} given an initial state (specifying a final \textbf{s}tate).
\end{itemize}
Altogether, $(d,s)$ form a \emph{Moore automaton} or Moore `machine'.

This definition extends to the case where $E,A,B$ are objects of the same monoidal category $(\clK,\otimes)$ \cite{Ehrig}; the situation is of particular interest when $\clK$ is Cartesian or semiCartesian: for example $\clK=(\TTop,\times)$ (and then $d,s$ are continuous, and in particular we can think of $f$ as an action over $E$ of the free topological monoid $A^*$ on $A$, cf. \cite{reutenauer1979topologie}),\footnote{It is worth to remark that here `topological' can --and should-- be intended broadly as follows: let $\clE$ be a Grothendieck topos; we can consider Moore automata in $\clE$, and then $E$ is an object of $\clE^M$, the category of $\clE$-objects with an action of $M=A^*$, the free monoid on $A$. Cf. also \cite{rogers2021toposes} for a thorough study of `toposes of monoid actions' when $\clE=\Set$.} but also $\clK=(\Cat,\times)$ (and then $f,g$ are functors, cf. \cite{noi:bicategories,guitart1974remarques,guitart1978bimodules}), $\clK=\Par$ (the category of sets and partial functions; in this case, $f,g$ are `partially defined'  functions that can fail on certain inputs), or a category of smooth manifolds \cite{spivak2016steady}.

For fixed input and output $A,B$ the structure of the class $\Mre(A,B)$ of Moore automata can be described as the strict pullback
\[\label{as_pb}\vxy{
	\Mre(A,B) \ar[r]^-{V'}\ar[d]_{U'}\xpb & \clK/B\ar[d]^-{V}\\
	\Alg(A\otimes \firstblank) \ar[r]_-{U} & \clK
	}\]
where $U : \Alg(A\otimes \firstblank) \to \clK$ is the forgetful functor from the category of endofunctor algebras for $A\otimes \firstblank : \clK\to \clK$, or --which is equivalent if $\clK$ has countable colimits-- actions of $A^*$, and $V : \clK/B\to \clK$ is the usual `domain' fibration \cite[p. 30]{CLTT}, \cite{loregian2019categorical} from the slice over $B$. Universal constructions are induced from the underlying category $\clK$ (in particular, all colimits and connected limits are created by $V'$, a result we survey in \cite{noi:completeness}).

As clean as it may seem, this characterisation leaves something out of the story; thinking of objects of $\Mre(A,B)$ as \emph{processes} from $A$ to $B$, it is natural to compose them \emph{sequentially}, as follows:
\[
	\begin{tikzpicture}[oriented WD, bbx=1em, bby=1ex, baseline=(current bounding box.center), scale=0.9]
		\node[bb={1}{1}, font=\tiny] (X1) {$(E,f,g)$};
		\node[bb={1}{1}, font=\tiny, right =2 of X1] (X2) {$(E',f',g')$};
		\node[bb={1}{1}, fit={($(X1.north west)+(-1,3)$) ($(X1.south)+(0,-3)$) ($(X2.east)+(1,0)$)}, bb name = ${\color{gray!70}(E\times E',f'\diamond f,g'\diamond g)}$, dotted, gray!70, font=\scriptsize] (Y) {};
		\draw[ar] (Y_in1') to (X1_in1);
		\draw[ar] (X1_out1) to (X2_in1);
		\draw[ar] (X2_out1) to (Y_out1');
		\draw[label]
		node at ($(Y_in1')!.5!(X1_in1)+(0,7pt)$)  {$A$}
		node at ($(X1_out1)!.5!(X2_in1)+(0,7pt)$)   {$B$}
		node at ($(X2_out1)!.5!(Y_out1')+(0,7pt)$)  {$C$};
	\end{tikzpicture}
\]
Given $(E,f,g)\in\Mre(A,B)$ and $(E',f',g')\in\Mre(B,C)$, we can consider the Moore automaton $(E\times E',f'\diamond f,g'\diamond g)\in\Mre(A,C)$, defined as
\[
	\begin{cases}
		f'\diamond f : E\times E' \times A \to E\times E' : (e,e',a)\mapsto (f(e,a),f'(e',ge)) \\
		g'\diamond g : E\times E'\to C : (e,e')\mapsto g'(e')
	\end{cases}
	\label{moore_compo}\]
This binary composition law is easily seen to be associative up to isomorphism. Now, do the various $\Mre(A,B)$ in \eqref{as_pb} arrange as the hom-categories of a bicategory $ \Mre $ of Moore automata? This would be the best situation to be in.

However, the composition $\firstblank\diamond\firstblank$ looks irredeemably nonunital, as there is no hope of finding a common section $A\mapsto \id_A$ for the domain and codomain endofunctors 
 sending $(E,f,g)\in\Mre(A,B)$ to $A$ and $B$ respectively. Indeed, to find such identity arrows, one should fix a triple $(U_A,x,y) : U_A\xot x A\times U_A, U_A \xto y A$ such that, at the very least, the compositions $(E\times U_B,\firstblank,\firstblank)$ and $(U_A\times E,\firstblank,\firstblank)$ are isomorphic to $(E,f,g)$ in $\Mre(A,B)$. So in particular, the carrier $U_A$ must be the terminal object / monoidal unit, which is easily seen to not work.

Confronted with this problem, two approaches are possible: one can forget about the composition law in \eqref{moore_compo}, and find one that  underlies a bicategory structure; or, one can forget about the absence of identity 1-cells, and study the structure exhibited by $\Mre$ `as it is'.

Leaning towards the first approach is not recommendable, as the composition law outlined above arises by restricting an existing bicategory structure. If we consider more general spans like
\[\vxy{E & A\times E \ar[r]^g\ar[l]_f& B,}\label{ex_mealy}\]
arranged in the class $\Mly(A,B)$, all the nice properties expounded above remain true, while at the same time addressing the problem: now that the $g$ is sensitive to the input, the identity 1-cell of an object $A$ is given by the span $1\xot ! A\times 1 \xto{\pi_A} A$,
given by the two projections. Moreover, this definition `makes sense', because when $\clK=\Set$ (or for that matter, any Cartesian category) the bicategory $\Mly$ so determined is just the category of \emph{Mealy automata}, which is known to be not just a bicategory, but \emph{the} bicategory \cite{Katis1997,ITA_2002__36_2_181_0,openTransitionSystems21} of pseudofunctors and lax natural transformations from $\bfN$ (the monoid of natural numbers) to $(\clK,\otimes)$: a bicategory of `lax dynamical systems', borrowing the analogy from \cite{Tierney1969}.
Our choice of composition of 1-cells is then constrained by the fact that $\firstblank\diamond\firstblank$ in $\Mly$ enjoys a certain universal property, and that in it composing 1-cells restricts to the subcategory of spans like \eqref{ex_mealy} with $g$ mute in the input variable.

We are thus led to follow the second path, and recognise that $\Mre$ is a \emph{semibicategory}: a structure that is precisely like a bicategory, except that it lacks identity 1-cells.

The purpose of this note is to initiate the study of $\Mre$ \emph{qua} semibicategory, embracing its (lack of) identity and trying to uncover its properties, both regarded as an object of the category of semibicategories, and related the `true' bicategory $\Mly$ where it embeds through a canonical functor $J$ (\autoref{tautj}).

In particular, assuming $\clK$ is Cartesian closed, we focus on the precise sense in which the semibicategory $\Mre$ and the bicategory $\Mly$, linked by $J$, fit into adjoint situations, and we provide a generic way of generating such adjunctions; leveraging on the pullback \eqref{as_pb}, and given a similar characterisation of $\Mly(A,B)$ as a strict pullback in $\Cat$, in \autoref{def_mlymre}.\ref{cf_1}, all adjunctions
\[\xyadj{\clK/B}{\Alg(A\times\firstblank)\cong\clK/B^A}{}{}\]
between slice categories pull back to $\Mre(A,B)$ and $\Mly(A,B)$. Now categorical algebra provides `standard' natural ways to obtain such adjunctions, when $\clK$ is rich enough (e.g., it is locally Cartesian closed): pulling back along a morphism or computing direct/inverse images \cite[A1.5.3]{Johnstone2002}, using the 2-functoriality of comma objects in $\Cat$ \cite{kelly_1989}, or the structure induced by the whole functor $\Mre(A,B)\to \clK/B\times\clK/B^A$ obtained from the pullback span \eqref{as_pb}. Each of these constructions yields a different adjunction, each of which is of some interest when translating 1-cells of $\Mre(A,B)$ and $\Mly(A,B)$ into `machines' that accept an input of type $A$ and produce an output of type $B$. In particular, we provide $J$ with a right adjoint, just to discover a moment later that $J=D_0$ is the $0$th term of a countable sequence of functors $D_n$, each of which is a fully faithful left adjoint $D_n\dashv K_n$ (cf. \eqref{di_enne}): this way, we obtain a rather intrinsic description for a classical equivalence theorem between Moore and Mealy machines \cite[3.1.4,3.1.5]{Shallit}, embedding $\Mre(A,B)$ in $\Mly(A,B)$ as a coreflective subcategory; in \autoref{the_univ_moo}, \autoref{moorif_right} we give a very explicit description of the coreflector as post-composition with a universal Moore cell $\fku X$. Adopting a similar point of view in \autoref{decap_right} we prove that a similar adjunction colocalises $\Mly(A,B)$ onto the subcategory of what we call \emph{soft} Moore machines (\autoref{def_acep}); again, the coreflector in \autoref{decap_right} is post-composition with a universal soft automaton $\fkp X$, that can be characterised (\autoref{constr_pinfty}, \autoref{P_is_acep}) as a certain pullback, or as the image of yet another functor (\autoref{some_adjoints_btwn_Mly}) obtained by a change of base/direct image adjoint situation \eqref{triple_for_precomp_funs}. This situation generalises: it is possible to define a whole hierarchy of subcategories $\aMre[n]$ for $n\ge 1$, each of which is the category of $D_nK_n$-coalgebras, thus arranged in a filtered diagram of coreflective inclusions
\[\vxy{
		\aMre\,\, \ar@{^{(}->}[r]& \,\,\aMre[2]\,\, \ar@{^{(}->}[r]& \,\,\aMre[3]\,\, \ar@{^{(}->}[r]& \,\,\dots\,\, \ar@{^{(}->}[r]& \,\,\Mre
	}\]
analogous in some regards to the `level' filtration of a topos \cite{kelly1989complete,menni2019level}, although the localisations in study here are not essential.

Unsurprisingly, a well-known and `solid' procedure like \cite[3.1.4,3.1.5]{Shallit} is just a universal construction in disguise. However, the interest in such a finding is also pedagogical: if hypothetical readers journeyed automata theory in any direction, armed only with bicategory theory, they would be able to rediscover natural constructions on state machines by just enacting what they know about more familiar universal constructions (composition of 1-cells, bicategorical limits \cite{kelly_1989,Street19,bird1989flexible}, especially of of Kan type,\footnote{Loosely speaking, a `bicategorical limit of a Kan type' is a left or right Kan extension, or a left/right Kan lifting, possibly pointwise or absolute.} \cite{street1981conspectus} the fundamental calculus of a Cartesian closed category, base change adjunctions\dots). The importance of such serendipity is hard to overestimate.

\section{The semibicategory of Moore machines}
Let's start by recording a precise definition of the categories of Moore and Mealy machines. Fix a monoidal category $(\clK,\otimes)$ in the background, which we will call the `base category'.
\begin{definition}[The hom-categories of $\Mly$ and $\Mre$] \agda{AsPullbacks.agda}\, \label{def_mlymre}
  Fix two objects $A,B\in\clK_0$; $A$ is to be thought of as the `input' object, and $B$ as the `output' object.

  Denote with $\Alg(A\otimes\firstblank)$ the category of endofunctor algebras for the functor $X\mapsto A\otimes X$ (or \emph{acts}, cf. \cite{kilp2000monoids}), and $(A\otimes\firstblank/B)$ the comma category of morphisms $A\otimes X\to B$. Then,
  \begin{enumtag}{cf}
    \item \label{cf_1} the category $\Mly(A,B)$ of Mealy machines with input $A$ and output $B$ is the strict pullback square
    \[\label{mly_pb}\vxy{
        \Mly(A,B) \ar[r]^{U'}\ar[d]_{V'} \xpb & (A\otimes\firstblank/B)\ar[d]^V \\
        \Alg(A\otimes\firstblank) \ar[r]_U & \clK
      }\]
    \item \label{cf_2} the category $\Mre(A,B)$ of  Moore machines with input $A$ and output $B$ is the strict pullback square
    \[\label{mre_pb}\vxy{
        \Mre(A,B) \ar[r]^{U'}\ar[d]_{V'} \xpb & \clK/B\ar[d]^V \\
        \Alg(A\otimes\firstblank) \ar[r]_U & \clK
      }\]
    if $\clK/B$ is the comma category of the identity functor $\id : \clK\to\clK$, and of the constant functor at $B\in\clK_0$.\footnote{It is worth noting three things: the definition of $\Mly(A,B)$ and $\Mre(A,B)$ depends on $\clK$; every lax monoidal functor $F:\clH\to\clK$ induces base changes pseudofunctors $F_* : \Mly_\clH(A,B)\to \Mly_\clK(FA,FB)$ --and similarly for Moore; under suitable assumptions, this can be extended to a 3-functor from (Cartesian monoidal categories, product-preserving functors, Cartesian natural transformations). This is the content of \cite[Remark 2.6]{noi:bicategories}, but it can be deduced in one fell sweep from the characterization in \cite{Katis1997}.}
  \end{enumtag}
\end{definition}
\begin{remark}\agda{Automata.agda}\label{moore_2cells}
  Unwinding the definition, an object of $\Mly(A,B)$ is a span $\inMe Eds : E \xot d A\otimes E \xto s B$,
  and a morpism $f$ from $\inMe Eds$ to $\inMe F{d'}{s'}$ is an $f : E\to F$ in the base category such that $f\circ d = d'\circ (A\otimes f)$ and $s'\circ (A\otimes f) = s$.
  Similarly, an object of $\Mre(A,B)$ is a span $\inMo Eds : E \xot d A\otimes E, E \xto s B$,
  and a morphism  $f$ from $\inMo Eds$ to $\inMo F{d'}{s'}$ is an $f : E\to F$ in the base category such that $f\circ d = d'\circ (A\otimes f)$ and $s'\circ f = s$.
\end{remark}
\begin{corollary}\agda{FMoore/Limits.agda}
  An immediate consequence of this definition (cf. \cite{noi:completeness} for the details) is that when $\clK$ is closed and $\firstblank\otimes\firstblank$ separately commutes with small colimits (e.g., when $\clK$ is closed), each category $\Mly(A,B)$ and $\Mre(A,B)$ is complete and cocomplete:
  \begin{itemize}
    \item the category $\Mre(A,B)$ has a terminal object, the internal hom $[A^*,B]$;
    \item the category $\Mly(A,B)$ has a terminal object, the internal hom $[A^+,B]$.\footnote{Assuming countable coproducts in $\clK$, the free \emph{monoid} $A^*$ on $A$ is the object $\sum_{n\ge 0} A^n$; the free \emph{semigroup} $A^+$ on $A$ is the object $\sum_{\ge 1} A^n$; clearly, if $1$ is the monoidal unit of $\otimes$, $A^*\cong 1+A^+$, and the two objects satisfy `recurrence equations' $A^+\cong A\otimes A^+$ and $A^*\cong 1+A\otimes A^*$.}
  \end{itemize}
\end{corollary}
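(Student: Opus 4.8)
The plan is to establish both terminal objects by exploiting the pullback presentations in \eqref{mly_pb} and \eqref{mre_pb} together with the assumed closure hypothesis on $\clK$. Since a strict pullback of categories creates limits componentwise whenever the cospan legs preserve them, the terminal object in $\Mre(A,B)$ (resp. $\Mly(A,B)$) will be assembled from a terminal object of $\Alg(A\otimes\firstblank)$ and a terminal object of $\clK/B$ (resp. $(A\otimes\firstblank/B)$) that agree after applying the forgetful functors $U$ and $V$ to $\clK$. First I would recall that the terminal object of any slice $\clK/B$ is simply $\id_B : B\to B$, so the constraint imposed by the pullback is that the carrier of the sought automaton must carry both an $(A\otimes\firstblank)$-algebra structure and a map to $B$, in a way that is jointly terminal.

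Next I would identify the correct carrier. The guiding idea is that a terminal Moore automaton is one whose states record \emph{all possible future outputs}: given the recurrence $A^*\cong 1 + A\otimes A^*$ for the free monoid, an automaton with input $A$ and output $B$ is essentially an action of $A^*$ together with an output map, so the cofree such structure on $B$ should have carrier $[A^*,B]$, the internal hom. Concretely I would show that $[A^*,B]$ carries a canonical $(A\otimes\firstblank)$-algebra structure $d_\infty : A\otimes[A^*,B]\to[A^*,B]$ obtained by transposing the action of $A$ on $A^*$ (i.e. precomposition with $A^*\cong A\otimes A^*$ restricted appropriately, using closedness to move $A\otimes\firstblank$ across the hom), and an output map $s_\infty : [A^*,B]\to B$ given by evaluation at the empty word $1\to A^*$. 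For $\Mly(A,B)$ the only change is that the output is allowed to depend on the current input, which corresponds to replacing $A^*$ by the free semigroup $A^+\cong A\otimes A^+$, whose recurrence makes $[A^+,B]$ the analogous cofree object.

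The key step is the universal property: for an arbitrary object $\inMo E{d}{s}$ I would construct the unique comparison map $E\to[A^*,B]$ by transposing the map $A^*\otimes E\to B$ that `runs the automaton', namely the composite that uses the iterated transition $d$ to fold a word in $A^*$ into a state transformation and then applies the output $s$. Closedness of $\clK$ is exactly what lets me transpose this $A^*\otimes E\to B$ into $E\to[A^*,B]$, and the recurrence equations for $A^*$ (resp. $A^+$) guarantee that this transpose is compatible with both the algebra structure and the output map, i.e. it is a genuine morphism in $\Mre(A,B)$ (resp. $\Mly(A,B)$) in the sense of \autoref{moore_2cells}; uniqueness follows because any such morphism is forced to agree with $s$ on the empty word and with $d$ on each subsequent letter, and $A^*$ is generated by these data.

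I expect the main obstacle to be bookkeeping rather than conceptual: making the transposition across the internal hom fully precise requires carefully tracking how $A\otimes\firstblank$ interacts with $[A^*,\firstblank]$ and verifying that the algebra-compatibility equation $f\circ d = d_\infty\circ(A\otimes f)$ and the output-compatibility equation $s_\infty\circ f = s$ hold on the nose, which amounts to checking that the `run the automaton' map respects the splitting $A^*\cong 1+A\otimes A^*$. The genuinely delicate point is confirming that the evident candidate is terminal in the strict (not merely bicategorical) pullback, so that no coherence isomorphisms intrude; here the componentwise creation of limits by the pullback does the heavy lifting, reducing everything to the two separate — and standard — universal properties of the cofree algebra and of the terminal slice object.
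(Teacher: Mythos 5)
There is a genuine gap, and it sits in the framing of your first paragraph (repeated at the end of your last one): it is \emph{not} true that a strict pullback of categories acquires a terminal object ``componentwise'', and in this very example the terminal object of $\Mre(A,B)$ is emphatically not assembled from terminal objects of the two legs. The terminal object of the slice $\clK/B$ is $\id_B$ (carrier $B$), while the terminal object of $\Alg(A\otimes\firstblank)$ is the terminal object $1$ of $\clK$ with its unique structure map $A\otimes 1\to 1$ (the forgetful functor $U$ creates limits); these have different images in $\clK$, so there is no ``matching pair of terminal objects'' at all. Conversely, the actual terminal object $[A^*,B]$ projects under $U'$ and $V'$ to objects that are terminal in \emph{neither} leg: the evaluation-at-the-empty-word map $[A^*,B]\to B$ is not terminal in $\clK/B$, and $[A^*,B]$ with its shift dynamics is not the terminal $(A\otimes\firstblank)$-algebra. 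This is precisely why the paper claims only that \emph{connected} limits and all colimits are created by the projection out of the pullback --- the empty diagram is not connected, so the terminal object needs a separate, non-componentwise argument. Your appeal to ``the universal property of the cofree algebra and of the terminal slice object'' is therefore either circular (if ``cofree algebra'' means the terminal Moore coalgebra, which is what is being constructed) or simply identifies the wrong objects.

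The good news is that your second and third paragraphs do not actually use this false reduction, and they contain a correct proof: equipping $[A^*,B]$ (resp.\ $[A^+,B]$) with the dynamics obtained by transposing precomposition along $A^*\cong 1+A\otimes A^*$ (resp.\ $A^+\cong A\otimes A^+$) and the output given by evaluation at the empty word (resp.\ at the single letter), then constructing the comparison morphism $E\to[A^*,B]$ as the transpose of the ``run'' map $A^*\otimes E\to B$, with uniqueness checked componentwise over $A^*\cong\sum_{n\ge 0}A^n$ by induction on $n$. This is in essence the argument of the reference \cite{noi:completeness} that the paper cites in lieu of a proof (the paper itself offers nothing beyond that citation). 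So the fix is organisational rather than mathematical: delete the componentwise-creation claim and present the direct verification of the universal property as the proof itself, making the transpositions and the inductive uniqueness argument explicit; the countable coproducts and the assumption that $\firstblank\otimes\firstblank$ preserves them are what license the decomposition of $A^*$ used in both existence and uniqueness.
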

From now on, we assume $(\clK,\times)$ is a \emph{Cartesian} category (more than often, Cartesian closed). Then, a bicategory $\Mly=\Mly_\clK$ can be defined as follows (cf. \cite{rosebrugh_sabadini_walters_1998} where this is called `$\Circ$', and a more general structure is studied, in case the base category has a non\hyp{}Cartesian monoidal structure --but then a clear analogy with Mealy machines is lost):
\begin{definition}[The bicategory $\Mly$, \cite{rosebrugh_sabadini_walters_1998}] \agda{Mealy/Bicategory.agda} \label{saba_mly}
  In the bicategory $\Mly$
  \begin{itemize}
    \item \emph{0-cells} $A,B,C,\dots$ are the same objects of $\clK$;
    \item \emph{1-cells} $A\mto B$ are the Mealy machines $(E,d,s)$, i.e. the objects of the category $\Mly(A,B)$ in \autoref{mly_pb}, thought as morphisms $\langle s,d\rangle : E\times A \to B\times E$ in $\clK$;
    \item \emph{2-cells} are Mealy machine morphisms as in \autoref{mly_pb};
    \item the composition of 1-cells $\firstblank \diamond \firstblank$ is defined as follows: given 1-cells $\langle s,d\rangle : E\times A\to B\times E$ and $\langle s',d'\rangle : F\times B \to C\times F$ their composition is the 1-cell $\langle s'\diamond s,d'\diamond d\rangle : (F\times E)\times A \to C\times (F\times E)$, obtained as
          \[\label{qui}\vxy[@C=1.5cm]{F\times E \times A \ar[r]^-{F\times \langle s,d\rangle} &
            F\times B \times E \ar[r]^-{\langle s',d'\rangle \times E} &
            C\times F\times E;}\]
    \item the \emph{vertical} composition of 2-cells is the composition of Mealy machine morphisms $f : E \to F$ as in \autoref{mly_pb};
    \item the \emph{horizontal} composition of 2-cells is the operation defined thanks to bifunctoriality of $\firstblank\diamond\firstblank : \Mly(B,C)\times \Mly(A,B)\to \Mly(A,C)$;
    \item the associator and the unitors are inherited from the monoidal structure of $\clK$.
  \end{itemize}
\end{definition}
\begin{remark}[The universal property of $\Mly$]
  Regard $\clK$ as a bicategory with a single 0-cell, and let $\Omega \clB$ be the bicategory of pseudofunctors $\bfN\to\clB$, lax natural transformations, and modifications; then (cf. \cite{Katis1997,ITA_2002__36_2_181_0}) there is an equivalence of bicategories $\Mly_\clK\cong \Omega \clK$.
\end{remark}
\begin{remark}[Composition of 1-cells fleshed out] \agda{Set/Automata.agda\#L53}
  The composition of 1-cells in $\Mly$ happens as follows (where we freely employ $\lambda$-notation available in any Cartesian closed category):
  \[
    d_2\diamond d_1 : \lambda efa.\langle d_2(f,s_1(e,a)),d_1(e,a)\rangle \label{d2d1_term}\qquad
    s_2 \diamond s_1 : \lambda efa.s_2(f,s_1(e,a))\notag
  \]
\end{remark}
\begin{definition}[Terminal extensions of machines] \agda{Set/Functors.agda\#L57} \label{termex}\leavevmode
  \begin{itemize}
    \item Every Moore machine $\inMo Eds$ can be extended as $\xymatrix{E & E\times A^* \ar[r]^-{s^\Delta}\ar[l]_-{d^\Delta}& B}$
          where $d^\Delta$ is obtained from the universal property of $A^*$, and $s^\Delta$ is defined inductively using the distributivity of products over coproducts: $s_1 = s$ and for each $n\ge 2$,
          \[s_n := \big(E\times A^n \xto{d\times A^{n-1}} E\times A^{n-1} \to \cdots \xto{d\times A} E\times A\xto{d} E \xto s B\big).\]
    \item Similarly, every Mealy machine $\inMe Eds$ has an extension $\xymatrix{E & E\times A^+ \ar[r]^-{s^\Delta}\ar[l]_-{d^\Delta} & B}$.
  \end{itemize}
\end{definition}
\begin{remark}  \label{termex_functors}
  The above procedure defines functors $(\firstblank)^\mreExt : \Mre(A,B)\to \Mly(A^*,B)$ and $(\firstblank)^\mlyExt : \Mly(A,B)\to \Mly(A^+,B)$. We call these the \emph{terminal extensions} of the Moore/Mealy machines at study (due to their link with the terminal morphism in $\Mre(A,B)$ and $\Mly(A,B)$, cf. \cite[Ch. 11]{Ehrig} and \cite{noi:completeness}).
\end{remark}
\begin{definition}[Embedding $\Mre$ in $\Mly$]\agda{Set/Adjoints.agda\#L28} \label{tautj}
  There is a 2-functor $J:\Mre\to\Mly$ from the category of Moore machines to the category of Mealy machines, regarding each Moore machine $(E,d,s)$ as a Mealy machine
  \[\vxy{
    E & E\times A\ar[l]_d\ar[r]^-{\pi_E} & E \ar[r]^s & B
    }\]
\end{definition}
\begin{notation}
  Graphical notation is very useful in bookkeeping the structure of a Mealy or Moore machine and invaluable in simplifying computations: so, let us depict the generic Mealy (red) and Moore (blue) machines with input and output $A,B$ as follows:
  \[
    \begin{tikzpicture}
      \Mealy[red!25]{E}\arrow{A}{B} \step[3]{\Moore{E}\arrow{A}{B}}
    \end{tikzpicture}
  \]
  Instead, when we need to look closer at the structure of a given automaton, we might use the following string-diagrammatic notation: the diagrams
  \[
    \begin{tikzpicture}
      \arTwo[green!25]{d} \gau{E}
      \step[1.5]{\twoAr[yellow!30]{s}\dro{B}}
      \down{\dro{A}}
      \up{\dro{E}}
    \end{tikzpicture}
  \]
  denote the same span of \eqref{moore_2cells} forming a Mealy 1-cell $(E,d,s)$ (and $d$ is oriented right-to-left!), and if the machine lies in the image of the functor $J$ above, the picture simplifies to
  \[
    \begin{tikzpicture}
      \arTwo[green!25]{d}
      \gau{E}\step[1.5]{\Umor[yellow!30]{s}\up{\dro{B}}
        \down\counit}
      \down{\dro{A}}
      \up{\dro{E}}
    \end{tikzpicture}
  \]
\end{notation}
(In such a situation, we say that $s$ is `Moore' or `of Moore type'.) Adopting this notation the composition of $(E,d_1,s_1) : A \mto B$ and $(F,d_2,s_2) : B\mto C$ is written
\[\label{da_comp}
  \begin{tikzpicture}
    \down{\twoAr[yellow!30]{s_1}\up{\gau{E}}\down{\gau A}\up[2]{\gau{F}}}
    \Uid \step{\twoAr[yellow!30]{s_2}\dro{C}}
    \begin{scope}[xshift=4cm,xscale=.75,yscale=-1,yshift=-1.5cm]
      \down\Did\comult\up[3]\comult
      \step{
        \xScale[2]{\down\Did}
        \Did
        \up[2.5]{\yScale[.5]\braid}
        \up[3]\Uid
      }
      \step[2]{
        \twoAr[yellow!30]{s_1}
        \up[3]{\twoAr[green!25]{d_1}}
      }
      \step[3]{
        \down{\twoAr[green!25]{d_2}}
        \up[2]\Uid
        \down{\dro{F}}
        \up[3]{\dro{E}}
      }
      \gau{E}\down[2]{\gau{F}}\up[3]{\gau A}
    \end{scope}
    \node at (1,-.5) {$s_2\diamond s_1$};
    \node at (5.5,-.5) {$d_2\diamond d_1$};
  \end{tikzpicture}
\]
and in case $s_2$ (resp., $s_1$) is Moore, it suitably simplifies.

From this we deduce at once the following lemma.
\begin{lemma}[Moore overrides Mealy, on both sides]\agda{Set/Automata.agda\#L58} \label{overrides}
  The composition of a Mealy 1-cell $\inMe Eds$ with a Moore 1-cell $\inMo F{d'}{s'}$ is a Moore 1-cell $\inMo {F\times E}{\#}{\#}$. Conversely, the composition of a Moore 1-cell $\inMo Eds$ with a Mealy 1-cell $\inMe F{d'}{s'}$ is a Moore 1-cell $\inMo {F\times E}{\#}{\#}$.

  In algebraic terms, the statement reads as follows: the composition bifunctors given in \eqref{qui}
  restrict to bifunctors
  \[\vxy[@R=0cm]{
      J\firstblank \diamond\firstblank : \Mre(B,C)\times\Mly(A,B) \ar[r] &\Mre(A,C)\\
      \firstblank\diamond J\firstblank : \Mly(B,C)\times\Mre(A,B) \ar[r] &\Mre(A,C).
    }\]
\end{lemma}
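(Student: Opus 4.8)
The plan is to read both restrictions straight off the explicit composition formula, since the sole function of the hypothesis ``one of the two factors is Moore'' is to make the \emph{output} component of the composite blind to the $A$-input; the transition component is irrelevant, because a Moore $1$-cell constrains only its output, never its dynamics.

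First I would recall the composition law fleshed out above: composing $(E,d_1,s_1):A\mto B$ with $(F,d_2,s_2):B\mto C$ yields a $1$-cell on the carrier $F\times E$ with
\[
  s_2\diamond s_1 : \lambda efa.\, s_2(f,s_1(e,a)),\qquad
  d_2\diamond d_1 : \lambda efa.\,\langle d_2(f,s_1(e,a)),\, d_1(e,a)\rangle .
\]
The structural observation I would isolate --- which is exactly what the string diagram \eqref{da_comp} displays, namely that the bottom $A$-wire is plugged only into the box $s_1$ --- is that the variable $a$ enters $s_2\diamond s_1$ \emph{solely} through the subterm $s_1(e,a)$. From here the two cases are immediate. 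For $J\firstblank\diamond\firstblank$ the $B\mto C$ factor is Moore, so after $J$ its output ignores the $B$-argument, $s_2=\hat s_2\circ\pi_F$; substituting gives $s_2\diamond s_1=\lambda efa.\,\hat s_2(f)$, visibly independent of $a$, hence of Moore type. For $\firstblank\diamond J\firstblank$ the $A\mto B$ factor is Moore, so $s_1=\hat s_1\circ\pi_E$ is already independent of $a$; substituting gives $s_2\diamond s_1=\lambda efa.\,s_2(f,\hat s_1(e))$, again independent of $a$. In both cases the composite is therefore the well-defined Moore span $\inMo{F\times E}{d_2\diamond d_1}{s_2\diamond s_1}$, which is what the two occurrences of $\#$ in the statement abbreviate.

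It remains to promote these object-level facts to the two asserted bifunctors. Bifunctoriality in each slot, together with the compatibility with $J$, is inherited verbatim from the bifunctoriality of $\firstblank\diamond\firstblank$ in \eqref{qui} and the $2$-functoriality of $J$ (\autoref{tautj}); the only new content is that the horizontal composite of $2$-cells again satisfies the stricter Moore output condition $s'\circ f=s$ rather than merely the Mealy one $s'\circ(A\times f)=s$. The hardest part --- though still routine --- will be checking this, and I would do it by the same token as above: given a Moore morphism $\phi$ and a Mealy morphism $\psi$, the output square of the composite only ever sees $\phi$ paired with $\hat s_2$ (in the first case), or sees $\psi$ entering $s_2$ through $\hat s_1$ (in the second), so the required equality reduces directly to the output conditions already carried by $\phi$ and $\psi$, with no need to cancel the projection $\pi_E$ or to invoke faithfulness of $J$. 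Granting this, the composition bifunctors of \eqref{qui} restrict along $J$ exactly as stated.
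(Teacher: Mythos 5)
Your proposal is correct and takes essentially the same route as the paper: the paper's entire proof is ``immediate using graphical reasoning and the definition of composition in \eqref{da_comp}'', and your $\lambda$-term computation is precisely that diagrammatic observation (the $A$-wire feeds only into the box $s_1$, so once either output map is blind to its input the composite output is blind to $a$) written out symbolically. Your extra explicit verification that horizontal composition of 2-cells lands in the stricter Moore condition $s'\circ f = s$ (rather than only $s'\circ(A\times f)=s$) is a point the paper leaves implicit under ``immediate'', and you carry it out correctly, using only the output conditions of the two given 2-cells.
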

\begin{proof}
  Immediate using graphical reasoning and the definition of composition in \eqref{da_comp}.
\end{proof}
\begin{remark}\agda{Set/Mealyfication.agda} 
  Due to the tautological nature of $J$ `virtually all' ways to compose together a Moore and a Mealy machine agree: 
  \[\begin{array}{cccc}
      J (\fkm \diamond \fkn) = \fkm \diamond J \fkn &
      J (\fkm \diamond\fkn) = J \fkm \diamond\fkn   &
      \fkm \diamond J\fkn = J \fkm \diamond\fkn     &
      J (\fkm \diamond\fkn) = J \fkm \diamond J\fkn                                        \\
      \footnotesize\fkm\in\Mly,\fkn\in\Mre
                                                    & \footnotesize\fkm\in\Mre,\fkn\in\Mly
                                                    & \footnotesize\fkm,\fkn\in\Mre
                                                    & \footnotesize\fkm,\fkn\in\Mre
    \end{array}\]
  (after the reduction of the involved $\lambda$-terms, we obtain a strict equality.)
\end{remark}
\begin{corollary}[Moorification functors]\label{cor:prepost}
  For any two objects $A,B\in\Mly$ there exist functors
  \[\vxy[@R=0cm]{
      \firstblank\rtimes_{AB} \firstblank : \Mly(A,B)\times\Mre(A,A) \ar[r] &\Mre(A,B)\\
      \firstblank\ltimes_{AB} \firstblank : \Mre(B,B)\times\Mly(A,B) \ar[r] &\Mre(A,B)
    }\]
  Clearly then, given endo-1-cells $\fkm : A\to A$ and $\fkn : B\to B$ in $\Mre$, we can define pre- and post-composition operations as $R_\fkm=\firstblank \rtimes_{AB}\fkm$ and $L_\fkn=\fkn\ltimes_{AB}\firstblank$, which we will use in \autoref{decap_right}, \autoref{moorif_right} to characterize functors converting Mealy into Moore machines.  
\end{corollary}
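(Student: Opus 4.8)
The plan is to derive Corollary~\ref{cor:prepost} directly from \autoref{overrides} by currying one of the two arguments of the restricted composition bifunctors, so essentially no new computation is required --- the content is a repackaging of the two bifunctors already produced in the lemma.

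First I would produce the functor $\firstblank\rtimes_{AB}\firstblank : \Mly(A,B)\times\Mre(A,A)\to\Mre(A,B)$. \autoref{overrides} gives a bifunctor $J\firstblank\diamond\firstblank : \Mre(B,C)\times\Mly(A,B)\to\Mre(A,C)$; specialising the three objects so that the Moore factor is an \emph{endo}-1-cell on the input side, I set $C=B$ and take the Moore argument in $\Mre(A,A)$ and the Mealy argument in $\Mly(A,B)$. Composing a Mealy 1-cell $A\mto B$ \emph{after} a Moore endo-1-cell $A\mto A$ lands in $\Mre(A,B)$, which is exactly the claimed codomain; so $\firstblank\rtimes_{AB}\firstblank$ is just the appropriate instance of the second restricted bifunctor $\firstblank\diamond J\firstblank$ of \autoref{overrides} (with the arguments swapped into the stated order). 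Dually, the functor $\firstblank\ltimes_{AB}\firstblank : \Mre(B,B)\times\Mly(A,B)\to\Mre(A,B)$ is the instance of the first restricted bifunctor $J\firstblank\diamond\firstblank$ obtained by taking the Moore endo-1-cell on the output object $B$, i.e. setting the Moore factor in $\Mre(B,B)$ and the Mealy factor in $\Mly(A,B)$, with composite again a Moore 1-cell $A\mto B$. Functoriality in each variable is inherited verbatim from the bifunctoriality asserted in \autoref{overrides}, so there is nothing further to check.

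Finally, for the pre- and post-composition operators I would simply fix one argument: given a fixed endo-1-cell $\fkm : A\to A$ in $\Mre$ I define $R_\fkm := \firstblank\rtimes_{AB}\fkm$ as the partial application of the first functor at its second slot, and given $\fkn : B\to B$ I define $L_\fkn := \fkn\ltimes_{AB}\firstblank$ as the partial application of the second functor at its first slot. Each is a functor $\Mly(A,B)\to\Mre(A,B)$ because fixing one variable of a bifunctor yields a functor in the remaining variable.

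The only genuinely delicate point is a bookkeeping one rather than a mathematical obstacle: I must keep the \emph{order} of the two arguments and the \emph{side} of composition aligned with the notation $\rtimes$ versus $\ltimes$, since \autoref{overrides} phrases its two restrictions with the Moore factor written first in one line and second in the other, whereas Corollary~\ref{cor:prepost} lists the Mealy factor first for $\rtimes$ and the Moore factor first for $\ltimes$. Once I fix a convention matching the graphical composite in \eqref{da_comp} --- reading left-to-right so that $R_\fkm$ feeds the output of the endomachine $\fkm$ on $A$ into the Mealy machine, and $L_\fkn$ feeds the output of the Mealy machine into $\fkn$ on $B$ --- the identification with the two bifunctors of \autoref{overrides} is immediate and the corollary follows with no residual computation.
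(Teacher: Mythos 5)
Your proposal is correct and matches the paper's (implicit) proof: the corollary carries no separate argument precisely because it is the instantiation of the two restricted bifunctors of \autoref{overrides} at endo-hom-categories (relabelling objects so the Moore factor is an endo-1-cell on the input, resp.\ output, side), followed by partial application in one variable. One minor bookkeeping note: no argument swap is actually needed, since $\firstblank\diamond J\firstblank$ already lists the Mealy factor first, matching $\rtimes_{AB}$, and $J\firstblank\diamond\firstblank$ lists the Moore factor first, matching $\ltimes_{AB}$.
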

\subsection{The semibicategory structure of $\Mre$}
\begin{corollary}[Sometimes you have no identity, and that's ok]
  \label{no_ids_never}
  The composition of a Moore 1-cell $[E,d,s]$ with a Moore 1-cell $[F,d',s']$ is a Moore 1-cell $[F\times E,d'\diamond d,s'\diamond s]$. Hence, if we take as objects the same of $\clK$, as 1-cells the Mealy machines of Moore type $\inMo Eds : A\mto B$, and considering 2-cells in the sense of \autoref{moore_2cells}, we obtain a sub-semibicategory of $\Mly$, the \emph{semibicategory of Moore machines}.
  It is important to note that the composition law of Moore machines is inherently nonunital: given the composition law defined in \autoref{saba_mly} \emph{there can be no other identity 1-cell than} $\inMe 1{\pi_1}{\pi_A} : A\mto A$, and this is not of Moore type.
\end{corollary}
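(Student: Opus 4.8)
The first assertion is a special case of \autoref{overrides}: regarding either of the two Moore $1$-cells as a Mealy $1$-cell of Moore type via $J$, the composite of a Moore with a Mealy is again Moore. Concretely, I would read this off the composition formula \eqref{d2d1_term}: the composite output $s'\diamond s : \lambda efa.\, s'(f,s(e,a))$ can only feel the input $a$ through the second argument of $s'$, which is mute precisely when $s'$ is of Moore type, so $s'\diamond s$ is then independent of $a$ and $\inMo{F\times E}{d'\diamond d}{s'\diamond s}$ is again Moore. Granting this, $\Mre$ inherits wholesale from the bicategory $\Mly$ everything it needs: the hom-categories $\Mre(A,B)$ are the pullbacks of \autoref{saba_mly} and include fully faithfully into $\Mly(A,B)$, horizontal composition of $1$-cells and of $2$-cells restricts by the computation just made, and the associator of $\Mly$ at Moore $1$-cells is a reassociation isomorphism of products, hence automatically a Moore $2$-cell still satisfying the pentagon. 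What is conspicuously absent are the unitors --- which is exactly the data of a semibicategory.

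For the nonunitality, recall first that $\Mly$ is a genuine bicategory whose identity $1$-cell on $A$ is $\inMe 1{\pi_1}{\pi_A}$; its output leg $\pi_A : 1\times A\to A$ reads the input and does not factor through the projection $1\times A\to 1$ onto the carrier, so it is visibly \emph{not} of Moore type. The real content is to exclude that $\Mre$ carries identities of its own that are not a priori inherited from $\Mly$, since an identity in a sub-semibicategory need only satisfy the unit laws against \emph{Moore} $1$-cells. I would argue directly. Suppose $u_A=(U,x,y):A\mto A$ were a right unit in $\Mre$, so that $f\diamond u_A\cong f$ in $\Mre(A,B)$ for every Moore $f$. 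Since an isomorphism of $1$-cells restricts to an isomorphism of their carriers, testing against the Moore $1$-cell $A\mto 1$ with carrier $1$ yields $1\times U\cong 1$, forcing $U\cong 1$ and therefore $x = {!}$.

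It remains to pin down $y : 1\times A\to A$, and here I would test the right-unit law against a single cleverly chosen machine: let $f_A$ be the Moore $1$-cell $\inMo A\pi{\id_A}:A\mto A$ with carrier $A$, transition the projection $\pi : A\times A\to A$ onto the \emph{input} (so the next state \emph{is} the input) and output $\id_A$ (mute, hence Moore). Unwinding $f_A\diamond u_A$ through \eqref{d2d1_term}, the composite transition sends the state to $\pi(\firstblank,y(\ast,a))=y(\ast,a)$ \emph{independently of the state}, whereas the transition of $f_A$ sends the state to $a$; the isomorphism $f_A\diamond u_A\cong f_A$ therefore forces $y(\ast,a)=a$, i.e. $y=\pi_A$. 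But $\pi_A$ is not of Moore type, contradicting $u_A\in\Mre$. The left-unit law is handled symmetrically, and since bicategorical identities are unique up to isomorphism, the unique candidate $\inMe 1{\pi_1}{\pi_A}$ is the only one that could serve, yet it escapes $\Mre$.

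I expect the only delicate point to be the bookkeeping in \eqref{d2d1_term}: keeping straight which output leg is fed into which transition, and checking that the isomorphism of $1$-cells really does descend to the carrier-level and dynamics-level identities that force $U\cong 1$ and $y=\pi_A$. The choice of test machine $f_A$ is what makes the argument uniform over an arbitrary Cartesian base: it exists with no assumption that $A$ admits global points, which a constant-output candidate would instead have required.
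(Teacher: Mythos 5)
Your proof is correct and takes essentially the same route as the paper: the first half is exactly the specialization of \autoref{overrides} (which is all the paper invokes), and for nonunitality the paper's own argument (sketched in \autoref{sec_intro}) likewise forces the carrier $U\cong 1$ and then dismisses the remaining data as ``easily seen to not work''---you simply fill in that step explicitly, using as test machine precisely the paper's universal Moore cell $\fku A$ of \autoref{the_univ_moo}. The only wrinkle is notational: having assumed $u_A\in\Mre$, its output should be typed $y:1\to A$ rather than $y:1\times A\to A$, so the computation forces $y\circ\pi_1=\pi_A$ (i.e.\ $\pi_A$ factors through $1$, impossible for non-subterminal $A$) instead of literally $y=\pi_A$; this does not affect the argument.
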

It is worth recording the precise definition of what we denote as a semibicategory; loosely speaking, we are considering a weakening of \cite[Definition 1.1]{CTGDC_2002__43_3_163_0} in which we relax the requirement that we have strictly associative composition; a semibicategory is then the semicategorical\footnote{\emph{Semicategories} were introduced by Mitchell as `categories without identities' and studied to some extent in \cite{Mitchell1972}; \emph{semigroupal categories} are often studied in relation with their cognates the \emph{magmoidal} ones (where a mere binary functor $\firstblank\odot\firstblank : \clM\times\clM\to\clM$ is given, cf. \cite{Davydov2017nuclei}); a semibicategory with a single object, is precisely a (nonstrict) semigroupal category, cf. \cite{Lu_2018} for a proof that semigroupal categories support graphical calculi.} equivalent of a bicategory \cite{10.1007/BFb0074299}, \cite[I.3]{Gray1974}, or again equivalently the multi-object version of semigroupal categories (as defined, e.g., in \cite[3.3]{boyarchenko2007associativity}, \cite{Lu_2018}, although in a strict sense; for the weak version cf. \cite[4.1]{Elgueta2004}, or adapt \cite{CTGDC_2002__43_3_163_0} as we do). Cf. also \cite{KOCK2008,Kock2010}.
\begin{definition} \agda{Categories/SemiBicategory.agda} \label{def_semibicat} 
  A \emph{semibicategory} consists of
  \begin{enumtag}{sc}
    \item a collection of objects or 0-cells $\bfS_0$;
    \item for all objects $X,Y\in\bfS_0$ a \emph{category} $\bfS(X,Y)$;
    \item \label{bicomp} for all objects $X,Y,Z\in\bfS_0$ a \emph{composition} bifunctor $\_\circ\_ : \bfS(Y,Z)\times\bfS(X,Y)\to \bfS(X,Z)$;
    \item for all $f:Z\to W,g:Y\to Z,h:X\to Y$, an \emph{associator} isomorphism $\alpha_{fgh} : f\circ (g\circ h) \cong (f\circ g)\circ h$ (cf. \eqref{diag1}).
  \end{enumtag}
  The associator morphism must satisfy the usual \emph{pentagon equation} for all $f,g,h,k$: the diagram of \eqref{diag3} is commutative.
\end{definition}
Now, a technical result of a general kind allows replacing the semibicategory of Moore machines with a bicategory of `Moore machines', which is identical to $\Mre$ in all respects, apart from the fact that a class of symbols $\id_A$, one for each object $A\in\clK_0$, has been forcefully added in order to play the r\^ole of identity 1-cell.
\begin{theorem}\label{unitize_thm}
  The obvious forgetful 2-functor $\MonCat\to \SgCat$ admits a left adjoint $(\_)^\un : \SgCat \to \MonCat$.
\end{theorem}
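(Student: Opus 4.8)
The plan is to exhibit $(\_)^\un$ as the \emph{free adjunction of a strict unit}, the categorified analogue of the classical unitalisation $S\mapsto S\sqcup\{1\}$ of a semigroup. Here I read $\SgCat$ and $\MonCat$ as the $2$-categories of semigroupal and monoidal categories (equivalently, of one-object semibicategories and bicategories); the construction applies verbatim to many-object semibicategories by adjoining one formal identity $1$-cell $\id_A$ at each object $A$. Writing $U:\MonCat\to\SgCat$ for the $2$-functor that discards the unit and the unitors, I would define, for a semigroupal category $\mathcal{S}$, a monoidal category $\mathcal{S}^\un$ whose objects are those of $\mathcal{S}$ together with one new object $I$, with tensor extended by the \emph{strict} rules $I\otimes X:=X=:X\otimes I$ and $I\otimes I:=I$, with $\mathrm{Hom}(I,I):=\{\id_I\}$ and no new morphisms between $I$ and the old objects. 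The first task is to check that $\mathcal{S}^\un$ is a genuine monoidal category: its associator is that of $\mathcal{S}$ on triples of old objects and is forced to be the identity the moment any argument is $I$; the unitors are identities by construction; the triangle identity then holds trivially; and the pentagon on four old objects is exactly the pentagon of $\mathcal{S}$ from \autoref{def_semibicat}, while every instance touching $I$ collapses to a commuting triangle of identities. Extending $(\_)^\un$ to send semigroupal functors and transformations to their evident extensions fixing $I$ makes it a strict $2$-functor.

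Next I would produce the unit and counit. The unit $\eta_{\mathcal{S}}:\mathcal{S}\to U(\mathcal{S}^\un)$ is the inclusion of $\mathcal{S}$ as the full semigroupal subcategory on the old objects, a strict semigroupal functor. The counit $\epsilon_{\mathcal{M}}:(U\mathcal{M})^\un\to\mathcal{M}$ is the identity on the objects inherited from $\mathcal{M}$ and sends the freshly adjoined strict unit $I$ to the genuine unit $1_{\mathcal{M}}$; its tensor-comparison cells are identities on old arguments but are precisely the unitors $\lambda,\rho$ of $\mathcal{M}$ on the arguments involving $I$. This is the only place where the (possibly weak) unit of a general monoidal category is absorbed, and verifying that $\epsilon_{\mathcal{M}}$ satisfies the monoidal-functor axioms reduces exactly to the triangle and unit coherences of $\mathcal{M}$.

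The cleanest way to package the universal property is to show that restriction along $\eta_{\mathcal{S}}$ is an isomorphism of hom-categories
\[
	\MonCat(\mathcal{S}^\un,\mathcal{M})\;\xrightarrow{\ \cong\ }\;\SgCat(\mathcal{S},U\mathcal{M}),
\]
$2$-natural in $\mathcal{S}$ and $\mathcal{M}$. The content on objects is that a (strictly unital) monoidal functor out of $\mathcal{S}^\un$ is \emph{uniquely} reconstructible from its restriction: since $I$ is a strict unit it must go to $1_{\mathcal{M}}$, and the monoidal-functor axioms --- read against the fact that $\mathcal{S}^\un$ has identity unitors --- force every comparison cell with an $I$-argument to be the corresponding unitor of $\mathcal{M}$, so nothing remains to be chosen beyond the underlying semigroupal functor on $\mathcal{S}$. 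The same bookkeeping, applied to the unit-component of a monoidal natural transformation (which is likewise forced), shows the restriction is full and faithful on $2$-cells. From this isomorphism the two triangle identities are formal, upgrading the data $(\eta,\epsilon)$ to a genuine $2$-adjunction $(\_)^\un\dashv U$.

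I expect the only real obstacle to be the coherence accounting of the third paragraph, together with a choice of convention: with arbitrary strong monoidal functors the image of $I$ is merely \emph{coherently isomorphic} to $1_{\mathcal{M}}$ rather than equal to it, so restriction-along-$\eta$ is then an equivalence rather than an isomorphism and one lands in a biadjunction; taking the standard strictly-unital (normal) monoidal functors removes this slack and yields the strict $2$-adjunction asserted. Once the strictness of the adjoined unit is exploited to kill every pentagon/triangle instance meeting $I$, the remaining verifications --- that ``send $I$ to $1_{\mathcal{M}}$ and read off the unitors'' respects horizontal and vertical composition so that the hom-wise isomorphism is $2$-natural --- are routine but numerous, and constitute the bulk of the work.
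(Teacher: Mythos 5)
Your proposal is correct and follows essentially the same route as the paper's own proof: both freely adjoin a strict unit object (your $I$, the paper's $\perp$, realized on underlying categories as the ``Maybe''/pointed-object construction $\clS_0+\mathbf{1}$), extend the tensor strictly so that every associator, pentagon, and triangle instance touching the new unit collapses to identities, and obtain the adjunction from the fact that any monoidal functor out of the unitization is forced on the adjoined unit by its restriction to $\clS$. Even your closing caveat about strong versus strictly unital functors matches the paper, which flags exactly this point by noting that the induced functor $\hat H$ is a \emph{normal} strong monoidal functor.
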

\begin{proof}
  \deferredRef{unitize_thm}. Cf. also \autoref{constr_unitize}.
\end{proof}
\section{A plethora of adjoints between machines}
The key observation guiding our intuition throughout the section is the following.
\begin{lemma}[Pulling back fibered adjunctions]\agda{PullbackFiberedAdjunctions.agda}\label{fundamental_lemma}
  Given a commutative cube
  \[\label{the_cubo}\vxy[@R=3mm@C=3mm]{
      &\clM'\ar@<.25em>@{.>}[dl] \ar[rr]\ar[dd]|\hole && \clB'\ar@<.25em>[dl] \ar[dd]\\
      \clM\ar@<.25em>@{.>}[ur]\ar[rr]\ar[dd] && \clB\ar[dd] \ar@<.25em>[ur]\\
      & \clA\ar@{=}[dl] \ar[rr]|(.5)\hole && \clK\ar@{=}[dl]\\
      \clA \ar[rr]&& \clK
    }\]
  assume the front and back vertical faces are strict pullbacks in $\Cat$; then any adjunction $F:\clB \leftrightarrows \clB':G$ pulls back to an adjunction $F':\clM\leftrightarrows \clM':G'$.\footnote{A similar result holds, when the bottom horizontal square is replaced by a commutative square of adjunctions in the sense of \cite[IV.7]{McL}; we will invoke the more general version as well: refer for this to the right part of \eqref{the_real_cube}.}
\end{lemma}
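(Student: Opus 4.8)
The plan is to recognise the passage from the right vertical face to the left vertical face as an instance of base change between strict slices, and to exploit that base change is a $2$-functor, hence preserves adjunctions. Write $T:\clA\to\clK$ for the common bottom functor, $R:\clB\to\clK$ and $R':\clB'\to\clK$ for the two right-hand projections, and $P,Q$ (resp.\ $P',Q'$) for the projections of the front pullback $\clM=\clA\times_\clK\clB$ (resp.\ the back pullback $\clM'=\clA\times_\clK\clB'$). The content of the cube is that $F\dashv G$ sits over $\clK$: one has $R'F=R$, $RG=R'$, and the unit $\eta:\id_\clB\Rightarrow GF$ and counit $\epsilon:FG\Rightarrow\id_{\clB'}$ are \emph{vertical}, i.e.\ $R\cdot\eta=\id$ and $R'\cdot\epsilon=\id$; equivalently $F\dashv G$ is an adjunction internal to the $2$-category $\Cat/\clK$. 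Since pullback along $T$ is a $2$-functor $T^*:\Cat/\clK\to\Cat/\clA$, and since an adjunction is purely equational data (two $1$-cells, two $2$-cells, the triangle identities), it is carried by any $2$-functor to an adjunction; applying $T^*$ to $F\dashv G$ produces $F'\dashv G'$ on the nose. For a self-contained and mechanisable argument, however, I would spell out the same construction by hand, via the universal property of the strict pullback.

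First I would define $F':\clM\to\clM'$ by its two components, setting $P'F':=P$ and $Q'F':=FQ$. This is legitimate because $R'(FQ)=RQ=TP$, using $R'F=R$ and the front pullback identity $TP=RQ$. Symmetrically, $G':\clM'\to\clM$ is given by $PG':=P'$ and $QG':=GQ'$, which is well defined since $R(GQ')=R'Q'=TP'$. A direct bookkeeping then yields $P'F'G'=P'$, $Q'F'G'=FGQ'$ and $PG'F'=P$, $QG'F'=GFQ$, so that $G'F'$ and $F'G'$ deviate from the respective identities only in their $\clB$- and $\clB'$-components.

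Next I would produce the unit and counit. Because $\clM$ is a strict pullback, a natural transformation between two functors landing in $\clM$ is exactly a compatible pair of natural transformations into $\clA$ and into $\clB$, the compatibility being that they agree after whiskering with $T$ and $R$. I would therefore define $\eta':\id_\clM\Rightarrow G'F'$ as the pair $(\id_P,\ \eta Q)$ and $\epsilon':F'G'\Rightarrow\id_{\clM'}$ as the pair $(\id_{P'},\ \epsilon Q')$. The compatibility condition reduces precisely to the verticality of $\eta$ and $\epsilon$: on the $\clA$-side one obtains $\id_{TP}$, while on the $\clB$-side one obtains $R\cdot\eta Q=(R\cdot\eta)Q=\id_{RQ}=\id_{TP}$, the middle equality being exactly that $\eta$ is a $2$-cell in $\Cat/\clK$. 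This is the step I expect to be the crux: everything hinges on the unit and counit being vertical over $\clK$, rather than merely lying between functors that happen to commute with projection to $\clK$; without this the pair would fail to descend to the pullback.

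Finally I would verify the two triangle identities for $(F',G',\eta',\epsilon')$. Since both sides are natural transformations built from the pullbacks, it suffices to check them after whiskering with the projections $P',Q'$ (for the $F'$-triangle) and $P,Q$ (for the $G'$-triangle), again by the lifting property of the strict pullback. The $\clA$-projections collapse to identities, while the $\clB'$- and $\clB$-projections, after substituting $Q'F'=FQ$ and $QG'=GQ'$, reduce to the original triangle identities $(\epsilon F)\circ(F\eta)=\id_F$ and $(G\epsilon)\circ(\eta G)=\id_G$ whiskered by $Q$ and $Q'$ respectively; these hold by hypothesis, closing the argument. The generalisation mentioned in the footnote, where the bottom face is replaced by a commutative square of adjoints, I would treat as a parallel verification, obtaining the desired adjunction by pasting the pulled-back adjunction above with the base-change adjunction induced along the bottom square.
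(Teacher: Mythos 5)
Your proof is correct and takes essentially the same route as the paper, whose entire proof reads ``Immediate, using the 2-dimensional universal property of the pullbacks involved'' --- precisely the argument you spell out by hand (and repackage as 2-functoriality of pullback $T^*:\Cat/\clK\to\Cat/\clA$). Your identification of the crux --- that the unit and counit must be \emph{vertical} over $\clK$, i.e.\ that ``fibered adjunction'' means an adjunction internal to $\Cat/\clK$, not merely one whose functors commute with the projections --- is exactly the hypothesis the paper leaves implicit in the word ``fibered''.
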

\begin{proof}
  Immediate, using the 2-dimensional universal property of the pullbacks involved.
\end{proof}
\begin{corollary}
  Any adjunction $\inlinexyadj{\clK/B}{A\times\firstblank/B}{F}{G}$ fibered over $\clK$ induces an adjunction
  \[\label{pullata}\inlinexyadj{\Mre(A,B)}{\Mly(A,B)}{F}{G}\]
  fibered over $\Alg(A\times\firstblank)$. (When there is no danger of confusion, we will keep denoting as $F\dashv G$ the pulled back adjunction. Refer to the left part of \eqref{the_real_cube} for the diagram we have in mind.)
\end{corollary}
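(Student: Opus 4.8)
The plan is to read this corollary off directly from \autoref{fundamental_lemma} by instantiating the cube \eqref{the_cubo} with the correct corners. First I would take the base of the cube to be $\clK$ itself, set $\clA = \Alg(A\times\firstblank)$ with $\clA\to\clK$ the forgetful functor $U$, and let the bottom square of \eqref{the_cubo} be the identity square on $U$. On the right vertical face I would place $\clB = \clK/B$ and $\clB' = (A\times\firstblank/B)$, each mapping to $\clK$ by its domain fibration, together with the given adjunction $F\dashv G$.

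Next I would identify the front and back vertical faces with the defining pullbacks of \autoref{def_mlymre}. The front face, living over $\clB=\clK/B$, is precisely the square \eqref{mre_pb}, exhibiting $\clM = \Mre(A,B)$ as the strict pullback of $U$ along the domain fibration of $\clK/B$; the back face, living over $\clB' = (A\times\firstblank/B)$, is the square \eqref{mly_pb}, exhibiting $\clM' = \Mly(A,B)$ as the strict pullback of $U$ along the domain fibration of $(A\times\firstblank/B)$. By construction these are strict pullbacks in $\Cat$, so the hypothesis of \autoref{fundamental_lemma} on the vertical faces is met verbatim.

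The only thing left to check is that the cube commutes, and this is exactly where the hypothesis that $F\dashv G$ is \emph{fibered over} $\clK$ enters: being fibered means that $F$ and $G$ commute with the two domain fibrations to $\clK$ (both written $V$) and that the unit and counit project to identities on $\clK$, so the right vertical square commutes over the base and the left vertical face is then induced. With the cube in place, I would simply invoke \autoref{fundamental_lemma} to obtain the pulled-back adjunction $F'\dashv G'$ between $\clM=\Mre(A,B)$ and $\clM'=\Mly(A,B)$, automatically fibered over $\clA = \Alg(A\times\firstblank)$, which is the assertion \eqref{pullata}.

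Since the substance is carried entirely by \autoref{fundamental_lemma}, I do not expect any genuine obstacle; the one point deserving care is bookkeeping the orientation of the two adjoint pairs, so that $F:\clK/B\to(A\times\firstblank/B)$ pulls back to $F:\Mre(A,B)\to\Mly(A,B)$ and not to its reverse, in agreement with the placement $\clB=\clK/B$ and $\clB'=(A\times\firstblank/B)$ in the cube.
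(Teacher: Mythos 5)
Your proposal is correct and is exactly the paper's argument: the corollary is read off from \autoref{fundamental_lemma} by instantiating the cube \eqref{the_cubo} with $\clA = \Alg(A\times\firstblank)$, $\clB = \clK/B$, $\clB' = (A\times\firstblank/B)$, and the defining pullbacks \eqref{mre_pb}, \eqref{mly_pb} as the front and back faces, which is precisely the left cube of \eqref{the_real_cube} that the paper points to. Your added remarks on why fiberedness makes the cube commute and on the orientation of the adjoint pair are just careful bookkeeping of the same instantiation, not a different route.
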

A blanket assumption for all that follows is that $\clK$ is (locally) Cartesian closed. Then there is an evident equivalence of categories $(A\times\firstblank)/B\cong\clK/B^A$; we deduce that:
\begin{lemma}\label{first_of_many}
  There are adjoint triples
  \[\label{the_triple}\vxy[@C=2cm]{
    \Sigma_f \dashv f^* \dashv \Pi_f : \clK/B^A \ar[r]|-{f^*}&\ar@<.5em>[l] \ar@<-.5em>[l] \clK/B\\
    }\]
  fibered over $\clK$, induced by a morphism $f' : B\to B^A$ (or rather by its mate $f : A\times B\to B$) pulls back to an adjoint triple
  fibered over $\Alg(A\times\firstblank)$, between $\Mre(A,B)$ and $\Mly(A,B)$.
\end{lemma}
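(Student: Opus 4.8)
At heart the statement is an application of \autoref{fundamental_lemma} to the standard adjoint triple furnished by local Cartesian closedness, so the plan is to first produce the triple $\Sigma_{f'}\dashv (f')^*\dashv \Pi_{f'}$ on the slices, recognise its left adjoint as the output-component of $J$, and then transport the whole triple across the two defining pullbacks \autoref{mre_pb} and \autoref{mly_pb}.

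To build the triple, note that the projection $f=\pi_B:A\times B\to B$ corresponds, under the Cartesian-closed adjunction $\clK(A\times B,B)\cong\clK(B,B^A)$, to the constant morphism $f':B\to B^A$. Since $\clK$ is locally Cartesian closed, the reindexing functor $(f')^*:\clK/B^A\to\clK/B$ carries a left adjoint $\Sigma_{f'}$ (post-composition with $f'$) and a right adjoint $\Pi_{f'}$ (the dependent product along $f'$); this is the triple appearing in the statement.

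The left adjoint is the leg we understand best: $\Sigma_{f'}$ sends $(E,s:E\to B)$ to $(E,f'\circ s)$, whose mate $A\times E\to B$ is exactly $s\circ\pi_E$, so through the equivalence $(A\times\firstblank)/B\cong\clK/B^A$ it is precisely the action of $J$ (\autoref{tautj}) on output objects. Since $\Sigma_{f'}$ preserves carriers it is a functor over the domain fibration $V$, so that, taking $\clA=\Alg(A\times\firstblank)$, $\clB=\clK/B$ and $\clB'=(A\times\firstblank)/B\cong\clK/B^A$ in \autoref{fundamental_lemma}, it pulls back to $J:\Mre(A,B)\to\Mly(A,B)$ over $\Alg(A\times\firstblank)$. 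Taking right adjoints of the pulled-back functors then assembles the claimed triple $J\dashv K\dashv L$.

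The real obstacle—and the reason the bare \autoref{fundamental_lemma} does not suffice—is that $(f')^*$ and $\Pi_{f'}$ are \emph{not} functors over $\clK$: reindexing sends a carrier $Y$ to $B\times_{B^A}Y$, and simply pairing this new object with the ambient $A\times\firstblank$-algebra fails, because the reindexed carrier supports no canonical algebra over the old one (it need not be closed under the dynamics). This is exactly what the generalized form of \autoref{fundamental_lemma} from the footnote is designed for: one replaces the bottom face of the cube by a commutative square of adjunctions (the right part of \eqref{the_real_cube}), so that the carrier-change on the $\clK$-side is matched by an adjoint base-change over which the two pullbacks are taken, after which the units and counits descend by the two-dimensional universal property of the strict pullbacks \autoref{mre_pb} and \autoref{mly_pb}. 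The only genuine content is then to check that the three pulled-back functors still form an adjoint triple over $\Alg(A\times\firstblank)$, i.e. that each adjunction survives this base change.
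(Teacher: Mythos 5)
You have assembled the triple $\Sigma_{f'}\dashv(f')^*\dashv\Pi_{f'}$ correctly, and your identification of $\Sigma_{f'}$ with the output-component of $J$ (\autoref{tautj}) is right --- though note it holds only in the special case $f=\pi_B$, $f'=\varpi_B$, which is the paper's subsequent \emph{example}; the lemma itself concerns an arbitrary $f':B\to B^A$. More importantly, you have put your finger on something the paper passes over in silence: only $\Sigma_{f'}$ commutes with the carrier functors $W:\clK/B\to\clK$ and $W':\clK/B^A\to\clK$, so the commutative cube of \autoref{fundamental_lemma} genuinely fails for the two adjunctions $\Sigma_{f'}\dashv(f')^*$ and $(f')^*\dashv\Pi_{f'}$; concretely, with $f'=\varpi_B$ and $g:1\to B^A$ picking a non-constant map, $(f')^*(1,g)$ has carrier $B\times_{B^A}1=\emptyset\not\cong 1$. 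Be aware that this diagnosis is a departure from the paper, whose own (implicit, two-line) proof is exactly the argument you reject: it reads the whole triple as ``fibered over $\clK$'' and applies the corollary of \autoref{fundamental_lemma} twice.

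The genuine gap is that your proposed repair cannot be carried out, and the sentence in which you defer it conceals the entire content of the lemma. The generalized form of \autoref{fundamental_lemma} (the footnote; right part of \eqref{the_real_cube}) still requires the bottom face of the cube to be filled, i.e.\ an adjunction $F_0\dashv G_0:\clK\to\clK$ with $W'\Sigma_{f'}=F_0W$ and $W(f')^*=G_0W'$. No such $G_0$ exists, adjoint or not: the carrier $B\times_{B^A}Y$ of a reindexed object depends on the structure map $Y\to B^A$ and not on the carrier $Y$ alone (take $(Y,g_1)$, $(Y,g_2)$ in $\clK/B^A$ with equal carriers, $g_1$ factoring through $\varpi_B$ and $g_2$ not; the two pullbacks are non-isomorphic), so $W(f')^*$ does not factor through $W'$ at all. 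Hence ``checking that each adjunction survives this base change'' is not a residual verification --- it is the assertion of the lemma, and the mechanism you invoke is structurally unable to produce it. A completion has to argue differently, for instance along the lines the paper itself uses in \autoref{some_adjs}: the pulled-back left adjoint preserves colimits (these are created on carriers), so when $\clK$ is locally presentable an adjoint functor theorem yields a right adjoint between $\Mly(A,B)$ and $\Mre(A,B)$. Note that this right adjoint is \emph{not} a naive lift of $(f')^*$: on a Mealy machine $(E,d,s)$ its carrier is the largest subalgebra of $(E,d)$ contained in $B\times_{B^A}E$, not $B\times_{B^A}E$ itself --- which is exactly why no cube argument, strict or generalized, can see it.
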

\begin{example}
  Let $\clK$ be locally Cartesian closed; let $\pi_B : A\times B\to B$ be the projection on the second factor and $\varpi_B : B\to B^A$ its mate; it is well-known that there is a triple of adjoints $\varpi_B\circ\firstblank\dashv \varpi_B^*\dashv \Pi_{\varpi_B}$, acting respectively as composition, pullback, and direct image for $\varpi_B$ \cite[A4.1.2]{Johnstone2002}. As a consequence, we obtain a triple of functors
  \[\label{proj_triple}
    \vxy[@C=1.5cm]{
      \varpi_B\circ\firstblank \dashv\varpi_B^*\dashv \Pi_{\varpi_B} : \Mly(A,B) \ar[r] & \Mre(A,B) \ar@<.5em>[l]\ar@<-.5em>[l]
    }
  \]
\end{example}
This is not the only way to obtain adjoint situations between machines from adjoint situations between comma categories: we can exploit the fact that \autoref{fundamental_lemma} holds in slightly more generality, also for adjunctions that are not fibered over $\clK$; thus,
\begin{itemize}
  \item any natural transformation $\alpha$ between $A\times\firstblank$ and the identity functor of $\clK$ induces, by the 2-dimensional part of the universal property of comma objects, a functor $\alpha^*$ between $\clK/B$ and $(A\times\firstblank)/B\cong \clK/B^A$. But the functor $A\times\firstblank$ is a comonad on any Cartesian category, whence it's naturally copointed by the counit $\epsilon : A\times\firstblank \To \id_\clK$; from this, we get a functor
        \[\label{counitata}\vxy{
            \epsilon^* : \clK/B\ar[r] & (A\times\firstblank)/B\cong\clK/B^A\\
          }\]
        pulling back to a functor $\bar\epsilon^* : \Mre(A,B)\to \Mly(A,B)$.
  \item All morphisms inductively defined by the rule $d^{(0)}:=\pi_E$, $d^{(1)}=d : A\times E\to E$, $d^{(n)} := d \circ (A\times d^{(n-1)})$ as in \autoref{termex} induce functors
        \[\label{di_enne}\vxy[@R=0mm]{
            D_n^* : \clK/B\ar[r] & (A\times\firstblank)/B\cong\clK/B^A : s \ar@{|->}[r] & s \circ d^{(n)}
          }\]
        pulling back thanks to \autoref{fundamental_lemma} to a functor $\bar D_n : \Mre(A,B)\to \Mly(A,B)$.
\end{itemize}
Adjoint situations like \eqref{proj_triple} however provide a nifty characterization for the precomposition functors $\Mly(u,B):\Mly(A',B)\to \Mly(A,B)$ induced by $u : A\to A'$ in the base category. From $u$, using the Cartesian closed structure of $\clK$, we get $B^u : B^{A'}\to B^A$, from which, applying \autoref{fundamental_lemma} to the triple of adjoints
\[\label{triple_for_precomp_funs}\vxy[@C=2cm]{
  B^u\circ\firstblank \dashv (B^u)^* \dashv \Pi_u : \clK/B^A \ar[r]|-{(B^u)^*}&\ar@<.5em>[l] \ar@<-.5em>[l] \clK/B^{A'}\\
  }\]
(same notation of \autoref{first_of_many}) we get a triple of adjoints $\xymatrix{\Mly(A,B) \ar[r]&\ar@<.5em>[l] \ar@<-.5em>[l] \Mly(A',B)}$ and an identification of the leftmost adjoint $B^u\circ\firstblank$ with $\Mly(u,B) : \Mly(A',B)\to \Mly(A,B)$.

As a direct consequence,
\begin{remark}\label{some_adjoints_btwn_Mly}
  Let $\clK$ be locally Cartesian closed; then each functor $\Mly(u,B) : \Mly(A',B)\to \Mly(A,B)$ sits in a triple of adjoints $\Mly(u,B) \dashv (B^u)^* \dashv \Pi_u$.
\end{remark}
\color{black}
\begin{remark}[Adjunctions between hom-categories]\label{some_adjs}
  In case $\clK$ is locally presentable (e.g., $\clK=\Set$, any Grothendieck topos, a docile category of spaces, etc.) AFTs in their classical form \cite[Ch. 5]{Bor2}, \cite[3.3.3---3.3.8]{Bor1} applied to $\epsilon^*, D_n^*$ yield right adjoints $R$ and $K_n$ and thus, by virtue of \autoref{fundamental_lemma} above, we get
  \begin{itemize}
    \item An adjunction $\bar \epsilon^* : \Mre(A,B)\rightleftarrows \Mly(A,B) : \bar R$;
    \item for evey $n\ge 1$ an adjunction $\bar D_n : \Mre(A,B)\rightleftarrows \Mly(A,B)  : K_n$, with $D_n$ fully faithful --and thus $\bar K_n$ is a coreflector.
  \end{itemize}
\end{remark}
Among these, especially the second bears some importance to automata theory. We will analyse its structure, providing a clean explicit construction for $K_0, K_1$ in the rest of the present section; in particular, we will completely characterise the coreflective subcategory of $\bar D_n\bar K_n$-coalgebras. A crucial point of our discussion will be that $\bar K_n$ arises as composition $\fku\ltimes\firstblank$ for a universal 1-cell $\fku$ of Moore type, following the notation of \autoref{cor:prepost}.
\begin{remark}[Biadjunctions between $\Mly$ and $\Mre$]
  The adjunctions in \autoref{some_adjs} constitute the action on hom-categories of \emph{biadjunctions} (cf. \cite{betti1981quasi} for an earlier reference, and the notion of a \emph{left adjoint of a left lax transformation} in \cite[p. 8]{CTGDC_1972__13_3_217_0}. This is a weakening of \cite[Ch. 9]{fiore} that drops the request to have adjoint \emph{equivalences} of hom-categories, replacing them with a family of adjunctions indexed over the objects of the two bicategories) $\bsE\dashv \bsR$, $\{\bsD_n\dashv \bsK_n \mid n\ge 0\}$
  between the semibicategory $\Mre$ (or equivalently its bicategorical reflection $\Mre^\un$ in the sense of \autoref{unitize_thm}) and the bicategory $\Mly$ of Mealy machines of \autoref{saba_mly}. Each of these (2-semi) functors is the identity on 0-cells.
\end{remark}
\begin{definition}
  In the notation of \autoref{di_enne}, the functors $D_0$ and $D_1$ are defined respectively as follows: a generic Moore machine $\inMo Eds$ goes to $\inMe Ed{s\pi_E}$ and $\inMe Ed{sd}$, and more in general, $D_n$ is the upper horizontal functor in 
  \[\vxy{
    \clK/B \ar[d]_U \ar[r]^-{D_n}& \clK/B^A \ar[d]^{U'}\\
    \clK \ar[r]_-{A^n\times\firstblank}& \clK
    }\]
  adapted from \autoref{fundamental_lemma} (In the lower rows, the functors $X\mapsto A^n\times X$ are clearly left adjoints).
\end{definition}
\begin{definition}[The universal Moore machine]\agda{Set/LimitAutomata.agda\#L65}\label{the_univ_moo}
  Let $X\in\clK_0$ be an object. There exists a Moore machine $\fku X = \inMo X{\pi_X}{\id_X}$ in $\Mre(X,X)$ where $\pi_X : X\times X\to X$ is the projection on the first coordinate. 
\end{definition}
The machine $\fku X$ can't be the identity 1-cell in $\Mre(X,X)$ by \autoref{no_ids_never} above, but it is `as near to the identity as a Moore 1-cell can be' to the identity in $\Mly(X,X)$, in a sense made precise by the following result: post-composition with $\fku X$ realizes the Moore machine associated to a Mealy machine.
\begin{theorem}[Moorification is composition with a universal cell]\label{moorif_right}\agda{Set/Adjoints.agda\#L28}
  The functor $K_1$ of \autoref{some_adjs} is naturally isomorphic to the functor $L_{\fku B}=\fku B\ltimes\firstblank : \Mly(A,B)\to \Mre(A,B)$ of \autoref{cor:prepost}.
\end{theorem}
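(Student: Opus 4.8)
The plan is not to construct $K_1$ from scratch but to use that, by \autoref{some_adjs}, $K_1$ is \emph{the} right adjoint of $\bar D_1$; since right adjoints are unique up to a unique natural isomorphism, it is enough to prove that $L_{\fku B}=\fku B\ltimes\firstblank$ is itself right adjoint to $\bar D_1$. Concretely, I would produce, for every Moore machine $P=\inMo E{d_P}{s_P}$ and every Mealy machine $M=\inMe F{d_M}{s_M}$, a bijection between $2$-cells $\bar D_1 P\to M$ in $\Mly(A,B)$ and $2$-cells $P\to L_{\fku B}M$ in $\Mre(A,B)$, natural in both variables; uniqueness of adjoints then gives $L_{\fku B}\cong K_1$.

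First I would unwind the two functors. By \autoref{di_enne} one has $\bar D_1 P=\inMe E{d_P}{s_P\circ d_P}$, so a $2$-cell $\phi:\bar D_1 P\to M$ is a map $\phi:E\to F$ with $\phi\circ d_P=d_M\circ(A\times\phi)$ and $s_M\circ(A\times\phi)=s_P\circ d_P$. For $L_{\fku B}$ I would read the composite off \autoref{overrides} and the composition rule \eqref{da_comp}: post-composing the universal Moore cell $\fku B=\inMo B{\pi_B}{\id_B}$ of \autoref{the_univ_moo} --- the machine that stores its last input and outputs it --- onto $M$ yields the Moore machine
\[L_{\fku B}M=\inMo{B\times F}{\tilde d}{\tilde s},\qquad \tilde s=\pi_B,\quad \tilde d=\langle s_M,d_M\rangle\circ(A\times\pi_F),\]
whose output coordinate merely records the previous $M$-output. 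This is precisely the classical state-and-output expansion of \cite[3.1.4]{Shallit}, here obtained by a single composition.

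The heart of the matter is the bijection. A $2$-cell $\psi:P\to L_{\fku B}M$ is a map $\psi=\langle\psi_1,\psi_2\rangle:E\to B\times F$ satisfying the transition law together with the \emph{Moore} output condition $\tilde s\circ\psi=s_P$; since $\tilde s=\pi_B$, this condition collapses to $\psi_1=s_P$ and fixes the $B$-coordinate completely. Projecting the transition law onto its two factors then reduces, after substituting $\psi_1=s_P$, to exactly the pair $\psi_2\circ d_P=d_M\circ(A\times\psi_2)$ and $s_P\circ d_P=s_M\circ(A\times\psi_2)$ cutting out a $2$-cell $\bar D_1 P\to M$. Hence $\psi\mapsto\psi_2$ and $\phi\mapsto\langle s_P,\phi\rangle$ are mutually inverse and plainly natural, so $\bar D_1\dashv L_{\fku B}$ and $L_{\fku B}\cong K_1$. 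Tracing the bijection reads off the unit $\langle s_P,\id_E\rangle:P\to L_{\fku B}\bar D_1 P$ and the counit $\pi_F:\bar D_1 L_{\fku B}M\to M$, which in passing makes the comonad $\bar D_1 K_1$ on $\Mly(A,B)$ explicit for the later analysis of the $\aMre[n]$.

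I expect the only delicate point to be bookkeeping rather than ideas: one must keep the \emph{Moore} law $s'\circ f=s$ and the \emph{Mealy} law $s'\circ(A\times f)=s$ of \autoref{moore_2cells} strictly apart, and respect the state ordering $B\times F$ dictated by \eqref{da_comp}, because it is exactly the Moore output law forcing $\psi_1=s_P$ that renders the $B$-coordinate redundant and identifies the two hom-sets. Everything else is a routine naturality verification, dischargeable either pointwise or, more in the spirit of the paper, through the graphical calculus underlying \eqref{da_comp}.
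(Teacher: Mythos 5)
Your proposal is correct, and it takes the route the paper itself relies on: since \autoref{some_adjs} characterises $K_1$ precisely as a right adjoint of $\bar D_1$, exhibiting the natural bijection between $2$-cells $\bar D_1 P\to M$ and $2$-cells $P\to \fku B\ltimes M$ --- which your componentwise computation establishes, the Moore output law forcing $\psi_1=s_P$ and $\psi_2$ absorbing exactly the two Mealy conditions --- and then invoking uniqueness of right adjoints is all that is needed. The paper offers no prose proof for this theorem (it defers to the Agda mechanization), and your explicit unit $\langle s_P,\id_E\rangle$ and counit $\pi_F$ supply precisely the data such a formalization constructs, so your write-up is, if anything, more detailed than the source.
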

\begin{remark}
  In layman's terms, $\fku B=\inMo{B}{\pi_B}{\id_B}$ is the \emph{universal machine of Moore type}, in the sense that sequential composition with it turns every Mealy machine into an `equivalent' Moore machine.

  This is a clean-cut categorical counterpart for the extremely well-known fact that the two concepts of Moore and Mealy machines are `essentially equivalent' in a sense made precise by \cite[3.1.4, 3.1.5]{Shallit}.
\end{remark}
\begin{construction}\agda{Set/LimitAutomata.agda\#L52}\label{constr_pinfty}
  Let $X\in\clK_0$ be an object. We define a Moore machine $\fkp X=\inMo {P_\infty X}{\delta_X}{\sigma_X}$ in $\Mre(X,X)$ as follows:
  \begin{itemize}
    \item its carrier results from the pullback in the base category $\clK$
          \[\label{pinfty}
            \vxy{
            P_\infty X \ar[r]\ar[d]\xpb & 1 \ar[d]^\h \\
            [X^*,X] \ar[r]& [X^+,X]
            }\]
          where the lower horizontal map results from the obvious inclusion $A^+\hookrightarrow A^*=1+A^+$ and $\h$ picks the element $\lambda(a\kons as).a$.\footnote{In every Cartesian closed category one obtains $\h : X^+\to X$ from the family $\pi_1 : X^n\to X$ of projections on the first coordinate, thanks to the universal property of $X^+=\sum_{n\ge 1} X^n$.}
    \item the map $\delta_X : P_\infty X\times X\to P_\infty X$ is defined by restriction to the dynamics of $[X^*,X]$ as $(f,a)\mapsto f$;
    \item the map $\sigma_X : P_\infty X \to X$ is defined as $f\mapsto f[\,]$.
  \end{itemize}
\end{construction}
\begin{remark}\label{pinfty_notation}
  Informally speaking, the carrier of $\fkp X$ is the subobject $H$ of $[X^*,X]$\footnote{Indeed, the left vertical map is monic; for the sake of clarity we write as if $\clK=\Set$, leaving the straightforward element-free rephrasing to the conscientious readers.}  made of those functions $f : X^*\to X$ that on nonempty lists coincide with the head function: $f[\,]$ is free to assume any value whatsoever, while $f(a\kons as)=a$ for every nonempty list $a\kons as$.

  Given this, $\sigma : f\mapsto f [\,]$ is clearly a bijection. So, it might seem unnecessarily verbose to distinguish $\fku X$ and $\fkp X$; yet, $\sigma$ is \emph{not} a Moore machine homomorphism, as the diagrams of \eqref{moore_2cells} do not commute when filled with $\sigma$.
\end{remark}
A property that is stable under isomorphism, enjoyed by $P_\infty A$ but not $A_\infty$ is the following: the diagram
\[\vxy{
  X\times P_\infty X\ar[r]^{\delta_X} \ar[d]_{\pi_{P_\infty X}}& P_\infty X \ar[d]^s \\
  P_\infty X \ar[r]_s & X
  }\]
commutes for $P_\infty X$ but not for $X$; the same property enforced for $\fku X$ would entail that $\forall(x,x')\in X.x=x'$, a blatantly false statement.
\begin{definition}[Soft machine, \protect{\cite{burroughs1961soft}}]\agda{Set/Soft.agda}\label{def_acep}
  A Moore machine $\inMo Eds$ is called \emph{soft} if the following square commutes:
  \[\label{diag_aceph}\vxy{
      A\times E\ar[r]^-d\ar[d]_{\pi_E} & E\ar[d]^s \\
      E \ar[r]_-s& B
    }\]
  and in simple terms, it can be thought of as a Moore machine whose output map `skips a beat and then starts computing', as its terminal extension $s^\Delta$ (cf. \autoref{termex}) is such that $s^\Delta(a\kons as,e) = s^\Delta(as,e)$.
\end{definition}
Clearly, soft Moore machines $A\mto B$ form a full subcategory $\aMre(A,B)$ of $\Mre(A,B)$; we denote $i : \aMre\to\Mre$ the most obvious locally fully faithful inclusion 2-functor. More is true:
\begin{lemma}[Soft machines cut other heads]\agda{Set/Soft.agda\#L32}\label{cut_head}
  Let $\fkm : A\mto B$ and $\fkn : B\to C$ be a Mealy machine and a Moore machine; if $\fkn$ is soft, then the composition $J\fkn\diamond \fkm$ is also soft.\footnote{Warning: it is \emph{not} true in general that the composition $\fkm\diamond J\fkn$ is soft.} Translated in graphical terms, the statement reads as follows, if we depict a soft Moore machine as \raisebox{.25em}{\tikz{\Moore[cyan!25]{\fkn}\fill ($(.75,.25)+(-.05,.075)$) circle (1pt);}}:
  \[
    \begin{tikzpicture}
      \Mealy{}\step{\Moore[cyan!25]{}\fill ($(.75,.25)+(-.05,.075)$) circle (1pt);}
      \step[3]{\Moore[cyan!25]{}\fill ($(.75,.25)+(-.05,.075)$) circle (1pt);}
    \end{tikzpicture}
  \]
\end{lemma}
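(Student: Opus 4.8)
The plan is to lean on \autoref{overrides}: since $\fkn$ is in particular a Moore 1-cell, the composite $J\fkn\diamond\fkm$ is already known to be of Moore type, so the only thing left to establish is that it satisfies the softness square of \autoref{def_acep}. First I would set $\fkm=\inMe E{d_1}{s_1}$ and $\fkn=\inMo F{d_2}{s_2}$ (so that $s_2:F\to C$), and record that under the embedding $J$ of \autoref{tautj} the Mealy machine $J\fkn$ is $\inMe F{d_2}{s_2\pi_F}$. Feeding this into the composition formula \eqref{d2d1_term}, I would compute the composite $J\fkn\diamond\fkm$: its carrier is $F\times E$, its dynamics is $\lambda efa.\langle d_2(f,s_1(e,a)),d_1(e,a)\rangle$, and its output is $\lambda efa.\,(s_2\pi_F)(f,s_1(e,a))=\lambda efa.\,s_2(f)$, confirming once more that the output ignores the input $a$.

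Next I would translate the softness condition \eqref{diag_aceph} for the composite Moore machine $(F\times E,D,S)$, where $S(f,e)=s_2(f)$ and $D((f,e),a)=\langle d_2(f,s_1(e,a)),d_1(e,a)\rangle$: it asks that $S\circ D=S\circ\pi_{F\times E}$, i.e.\ that $s_2\big(d_2(f,s_1(e,a))\big)=s_2(f)$ for all $a,e,f$. The decisive structural remark is that $S$ inspects the state of the composite only through the $F$-coordinate via $s_2$, while that coordinate is advanced by $d_2$ applied to the input $b:=s_1(e,a)$ synthesised by $\fkm$. Thus the softness of the composite is governed entirely by how $s_2$ interacts with $d_2$.

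Finally, the softness hypothesis on $\fkn$ is precisely the equation $s_2\circ d_2=s_2\circ\pi_F$, so substituting the input $b=s_1(e,a)$ yields $s_2\big(d_2(f,s_1(e,a))\big)=s_2(f)$ — exactly the identity required above, completing the proof. I do not anticipate a genuine obstacle, only the bookkeeping of product coordinates and argument order in \eqref{d2d1_term}; graphically the entire argument is a single move, sliding the softness dot sitting on $\fkn$ leftwards through the one dynamics step that consumes the input, so that it becomes the softness dot of $J\fkn\diamond\fkm$, which is exactly the picture drawn in the statement. This viewpoint also makes the footnoted warning transparent: when instead a Mealy factor is composed \emph{after} a soft Moore machine, in the mirror shape $\fkm\diamond J\fkn$, the outermost factor is Mealy and its output may genuinely depend on the freshly-consumed input through a dynamics step that the softness of the inner Moore machine cannot cancel, so no dot survives to the end.
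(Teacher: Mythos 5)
Your proof is correct and is essentially the paper's own argument: the paper disposes of this lemma as ``immediate by inspection, using graphical reasoning and the definition of composition,'' and your computation is precisely that inspection carried out in $\lambda$-notation --- unfold $J\fkn\diamond\fkm$ via \eqref{d2d1_term}, observe the composite output only sees the $F$-coordinate through $s_2$, and instantiate the softness equation $s_2\circ d_2=s_2\circ\pi_F$ at the synthesised input $b=s_1(e,a)$. Your closing remark about sliding the softness dot through the dynamics step is exactly the graphical move the paper has in mind, so the two proofs differ only in notation, not in substance.
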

\begin{proof}
  Immediate by inspection, using graphical reasoning (diagram \eqref{diag_aceph} is easy to translate graphically) and the definition of composition in \autoref{da_comp}.
\end{proof}
\begin{remark}\agda{Set/Soft.agda\#L40}\label{P_is_acep}
  The Moore machine $\inMo{P_\infty X}{\delta_X}{\sigma_X}$ in \autoref{constr_pinfty} is soft.
\end{remark}
\begin{corollary}\agda{Set/Soft.agda\#L43}
  Let $\inMe Eds : A\mto B$ be a generic Mealy machine; then, by virtue of \autoref{overrides}, the composition $\inMo{P_\infty B}{\delta_B}{\sigma_B}\,\ltimes_{AB}\,\inMe Eds$ is a Moore machine, and by virtue of \autoref{cut_head} it is soft. Hence, the functor $L_{\fkp B} : \Mly(A,B) \to \Mre(A,B)$ factors through the obvious full inclusion $i_A : \aMre\to\Mre$.
\end{corollary}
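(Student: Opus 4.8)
The plan is to assemble three facts already isolated upstream. First I would recall that the carrier automaton $\fkp B=\inMo{P_\infty B}{\delta_B}{\sigma_B}$ is a \emph{soft} Moore machine, which is precisely the content of \autoref{P_is_acep}. Next, unwinding the notation of \autoref{cor:prepost}, the operation $\fkp B\ltimes_{AB}\firstblank$ is \emph{by definition} the restricted composition $J\fkp B\diamond\firstblank$ appearing in \autoref{overrides} (with $C=B$); so for a generic Mealy 1-cell $\inMe Eds:A\mto B$ the object $\fkp B\ltimes_{AB}\inMe Eds$ is literally $J\fkp B\diamond\inMe Eds$, and \autoref{overrides} guarantees that this composite lands in $\Mre(A,B)$, i.e. is a genuine Moore machine.

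Softness then follows by a single appeal to \autoref{cut_head}: taking $\fkm=\inMe Eds$ and $\fkn=\fkp B$, the hypothesis that $\fkn$ is soft is met by the previous step, so the composite $J\fkp B\diamond\inMe Eds$ is itself soft. This already shows that the object-part of the functor $L_{\fkp B}=\fkp B\ltimes_{AB}\firstblank$ always produces objects of the full subcategory $\aMre(A,B)\subseteq\Mre(A,B)$.

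It remains to promote this pointwise statement to an honest factorization of the \emph{functor} $L_{\fkp B}$ through the inclusion $i_A:\aMre\to\Mre$, and here I would simply invoke fullness. A morphism $f:\inMe Eds\to\inMe F{d'}{s'}$ in $\Mly(A,B)$ is sent by $L_{\fkp B}$ to a morphism of $\Mre(A,B)$ whose source and target are both soft by the argument above; since $\aMre(A,B)$ is by construction \emph{full} in $\Mre(A,B)$, that morphism is already a morphism of $\aMre(A,B)$. Hence $L_{\fkp B}$ corestricts to a functor $\Mly(A,B)\to\aMre(A,B)$ whose composite with $i_A$ recovers $L_{\fkp B}$, which is exactly the claimed factorization.

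There is no substantial obstacle: every nontrivial ingredient—the softness of $\fkp B$, the Moore-overrides-Mealy closure, and the head-cutting lemma—has been proved beforehand, so the corollary is genuinely a matter of stringing them together. The only points requiring minimal care are the bookkeeping identification $\ltimes_{AB}=J(\firstblank)\diamond(\firstblank)$ inherited from \autoref{cor:prepost} and \autoref{overrides}, and the remark that fullness of $i_A$ makes the morphism-level factorization automatic once the object-level one is established.
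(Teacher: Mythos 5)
Your proof is correct and follows exactly the paper's own route: the composite is Moore by \autoref{overrides}, soft by \autoref{cut_head} applied with $\fkn=\fkp B$ (whose softness is \autoref{P_is_acep}), and the factorization through $i_A$ follows since $\aMre(A,B)$ is full. The only addition is your explicit spelling-out of the morphism-level corestriction via fullness, which the paper leaves implicit but is exactly the intended argument.
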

\begin{remark}\label{ace_as_coalg}
  Given that $D_1$ is fully faithful, the unit of the adjunction $D_1\dashv K_1$ is a natural isomorphism; denote $S_1 := D_1K_1$ the comodality (= idempotent comonad) obtained as a consequence. Then the category of $S_1$-coalgebras is isomorphic to the category of soft Moore machines.
\end{remark}
An object of the form $\inMo{P_\infty A}{\delta_A}{\sigma_A}$ is thus a universal soft Moore machine, in the sense specified by the following coreflection theorem:
\begin{theorem}[Decapitation as a right adjoint]\label{decap_right}\agda{Set/Adjoints.agda\#L46}
  The coreflector of the comodality of \autoref{ace_as_coalg} is naturally isomorphic to the postcomposition functor $L_{\fkp B}=\fkp B\ltimes\firstblank$.
\end{theorem}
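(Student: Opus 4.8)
The plan is to identify $L_{\fkp B}$ with the coreflector by a universal property and uniqueness of adjoints, mirroring the route taken for \autoref{moorif_right}. By \autoref{ace_as_coalg} the comodality $S_1=\bar D_1K_1$ is idempotent and its coalgebras are exactly the soft Moore machines, so its coreflector is the right adjoint $C$ to the inclusion $\iota\colon\aMre(A,B)\hookrightarrow\Mly(A,B)$ (along which a soft machine is regarded as a Mealy 1-cell unambiguously, since on soft objects $\bar D_1$ coincides with $J$ by \autoref{def_acep}). As the right adjoint is unique up to canonical isomorphism, it suffices to prove that $L_{\fkp B}=\fkp B\ltimes\firstblank$ is right adjoint to $\iota$. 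First I would record that $L_{\fkp B}$ is a legitimate candidate, i.e. that it lands in $\aMre(A,B)$: this combines \autoref{overrides} (a Mealy 1-cell post\hyp{}composed with a Moore cell is Moore) with \autoref{cut_head} and \autoref{P_is_acep} (post\hyp{}composing the soft cell $\fkp B$ keeps the result soft).

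The heart of the argument is then a bijection $\aMre(\fkn,\fkp B\ltimes\fkm)\cong\Mly(\iota\fkn,\fkm)$, natural in the soft Moore machine $\fkn$ and the Mealy machine $\fkm$. I would produce it by unwinding the composition law \eqref{da_comp}--\eqref{d2d1_term} on $J\fkp B\diamond\fkm$ together with the explicit description of $\fkp B=\inMo{P_\infty B}{\delta_B}{\sigma_B}$ from \autoref{constr_pinfty}. A Moore homomorphism $\fkn\to\fkp B\ltimes\fkm$ is a pair of components landing in $P_\infty B\times E$; the component into $P_\infty B$ is forced by the universal property of the pullback \eqref{pinfty}, since its value must agree with the head map $\h$ on nonempty lists and is free only on the empty list, where it records the output $s_\fkn$. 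The softness of $\fkn$ (\autoref{def_acep}) is precisely what makes this forced component compatible with the dynamics, so that the homomorphism is determined by its remaining component, a map of underlying $(A\times\firstblank)$\hyp{}algebras that is exactly a Mealy 2-cell $\iota\fkn\to\fkm$; running the correspondence backwards supplies the inverse, and naturality in both variables is immediate from the functoriality of $\ltimes$ (\autoref{cor:prepost}).

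A second, more structural route I would keep in reserve exploits \autoref{moorif_right} directly: there $K_1\cong L_{\fku B}$ exhibits Moorification as post\hyp{}composition with the universal Moore cell $\fku B$ of \autoref{the_univ_moo}, and $\fkp B$ is its \emph{decapitation}---the same carrier up to the bijection $\sigma_B$ of \autoref{pinfty_notation}, but with the dynamics bent so as to skip a beat. Since $\bar D_1K_1$ is idempotent with coalgebra category $\aMre$, the coreflector onto soft machines should arise from the Moorification recipe by substituting the soft universal cell $\fkp B$ for $\fku B$; making this substitution precise reduces to the same pullback bookkeeping as above.

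The step I expect to be the main obstacle is showing that post\hyp{}composition with $\fkp B$ is genuinely \emph{universal} among soft machines lying over $\fkm$, rather than merely producing \emph{some} soft machine. The delicate point is that $\sigma_B$ is a carrier isomorphism but \emph{not} a Moore homomorphism (\autoref{pinfty_notation}): this is exactly the feature separating $\fkp B$ from $\fku B$ and forcing softness, and it must be tracked so that the hom\hyp{}bijection reads the head constraint of \eqref{pinfty} as the skip\hyp{}a\hyp{}beat identity $s^\Delta(a\kons as,e)=s^\Delta(as,e)$ rather than as an honest isomorphism of 1-cells. Once the universal property of \eqref{pinfty} is shown to cut the Moore homomorphisms down to precisely the Mealy 2-cells into $\fkm$, the theorem follows by uniqueness of right adjoints.
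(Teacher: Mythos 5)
Your proposal cannot be checked against a prose argument in the paper, because the paper gives none: \autoref{decap_right} is justified only by the accompanying Agda artifact, so the proposal has to stand on its own. Judged that way, it has a genuine gap, located exactly at the step you describe as ``a map of underlying $(A\times\firstblank)$\hyp{}algebras that is exactly a Mealy 2-cell $\iota\fkn\to\fkm$'': the two halves of your argument require \emph{incompatible} readings of the dynamics $\delta_B$ of $\fkp B$. For the half asserting that $L_{\fkp B}$ lands in $\aMre(A,B)$ you invoke \autoref{P_is_acep}, i.e.\ softness of $\fkp B$; since an element of $P_\infty B$ is determined by its value on $[\,]$, the softness square \eqref{diag_aceph} forces $\delta_B(f,b)=f$. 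With that dynamics, the composite computed from \eqref{da_comp} is $\fkp B\ltimes\fkm=\bigl(P_\infty B\times F,\ (a,(f,x))\mapsto(f,d'(a,x)),\ (f,x)\mapsto f[\,]\bigr)$, which does not depend on the output $s'$ of $\fkm$ at all: the only places $s'$ could enter are as the input fed to $\delta_B$ and to the Moore-type output of $\fkp B$, and both discard it. Hence a Moore 2-cell $\fkn\to\fkp B\ltimes\fkm$ amounts to a bare $(A\times\firstblank)$-algebra map $g_2:E\to F$ (the $P_\infty B$-component is forced by the output condition, and its dynamics condition is softness of $\fkn$, just as you say --- but no condition tying $g_2$ to $s'$ survives), whereas a Mealy 2-cell $\iota\fkn\to\fkm$ must also satisfy $s'\circ(A\times g_2)=s\circ\pi_E$. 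These genuinely differ: take $\clK=\Set$, $A=B=\{0,1\}$, $\fkn$ the one-state soft machine with output $0$, and $\fkm$ the one-state Mealy machine with $s'(a,*)=a$; then $\Mly(\iota\fkn,\fkm)=\emptyset$ while $\aMre(\fkn,\fkp B\ltimes\fkm)$ is a singleton. So the hom-bijection at the heart of your proof fails --- indeed no functor that factors through forgetting $s'$ can be right adjoint to $\iota$.

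Conversely, your bijection does go through if $\delta_B$ is instead read as the honest restriction of the terminal dynamics of $[B^*,B]$, sending $(f,b)$ to the unique element of $P_\infty B$ whose value at $[\,]$ is $b$: then the forced component's compatibility reads $s\circ d=s'\circ(A\times g_2)$, which together with softness of $\fkn$ is precisely the missing Mealy condition. But under that reading $\sigma_B$ \emph{is} a Moore isomorphism $\fkp B\cong\fku B$, \autoref{P_is_acep} fails, $\fkp B\ltimes\fkm$ is no longer soft, and the first half of your argument (as well as any daylight between \autoref{decap_right} and \autoref{moorif_right}) collapses. This is exactly the subtlety you flag in your final paragraph --- that $\sigma_B$ is not a Moore homomorphism --- but flagging it is not resolving it: as written, your proof uses the soft reading in one half and the delay reading in the other. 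The same tension defeats your fallback route, since $\fkp B\not\cong\fku B$ and uniqueness of right adjoints forbids obtaining $L_{\fkp B}$ from $K_1\cong L_{\fku B}$ by ``substituting'' universal cells. A correct argument must first fix one coherent set of definitions for $\delta_B$ and softness (the paper does this only inside the Agda formalization) and then accept that at most one of the two halves of your proposal survives under it.
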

Similarly, denote $S_n=D_nK_n$ the comodality of the adjunction obtained from \autoref{di_enne}. Then, the category of $S_1$-coalgebras is isomorphic to the full category $\aMre[n]$ of \emph{$n$-soft Moore machines}, if we understand \autoref{def_acep} generalised as follows: the square
\[\label{diag_naceph}\vxy{
  A^n\times E\ar[r]^-{d^{(n)}}\ar[d]_{\pi_E} & E\ar[d]^s \\
  E \ar[r]_-s& B
  }\]
commutes (cf. \eqref{di_enne} for the definition of $d^{(n)}$).

Evidently, $\aMre = \aMre[1]\subseteq \aMre[2]$ and more in general, there is a chain of full inclusions
\[\vxy{
		\aMre\,\, \ar@{^{(}->}[r]& \,\,\aMre[2]\,\, \ar@{^{(}->}[r]& \,\,\aMre[3]\,\, \ar@{^{(}->}[r]& \,\,\dots\,\, \ar@{^{(}->}[r]& \,\,\Mre
	}\]
analogous to the `level' filtration of a topos (cf. \cite{kelly1989complete,Kennett2011,menni2019level,roy1997ball,Street2000}, especially in the case of simplicial sets or geometric shape for higher structures indexed over the natural numbers).

As a final remark, let's record that the results of \autoref{some_adjoints_btwn_Mly} provide an extremely intrinsic characterization for the object $P_\infty X$ of \autoref{constr_pinfty}, and a comparison theorem for the operations $(\firstblank)^\mreExt,(\firstblank)^\mlyExt$ of terminal extension of a Mealy machine and of a Moore machine expounded in \autoref{termex_functors}.
\begin{lemma}\agda{Set/Functors.agda\#L113}
  The left square in \eqref{sq_of_J}
  commutes (in the notation of \eqref{triple_for_precomp_funs}, $\Mly(t,B)$ is the obvious restriction functor induced by the `toList' map $t : A^+\hookrightarrow A^*$).
\end{lemma}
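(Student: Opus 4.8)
The plan is to show that the two composite functors $\Mre(A,B)\to\Mly(A^+,B)$ associated with the left square of \eqref{sq_of_J}, namely $\Mly(t,B)\circ(\firstblank)^\mreExt$ and $(\firstblank)^\mlyExt\circ J$, coincide. The key simplification is that every functor in sight is the identity on carriers: the embedding $J$ of \autoref{tautj} keeps the carrier $E$ and only replaces $s$ by $s\circ\pi_E$; the two terminal extensions of \autoref{termex} keep $E$ and merely reassemble the dynamics and output maps over $A^*$, resp. $A^+$; and the restriction functor $\Mly(t,B)$ of \eqref{triple_for_precomp_funs} just precomposes the structure maps with $t\times E$. Consequently a morphism $f\colon E\to F$ is sent by both composites to $f$ itself, so naturality is automatic once the functors are shown to agree on objects. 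The whole statement therefore reduces to checking that, for a generic Moore machine $(E,d,s)$, the two paths produce the same pair of dynamics and output over $A^+$.

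First I would match the dynamics. The extension $(\firstblank)^\mreExt$ equips $E$ with the iterated dynamics $d^\Delta\colon A^*\times E\to E$ assembled from the maps $d^{(n)}$ of \eqref{di_enne}, and $\Mly(t,B)$ restricts it along the inclusion $t\colon A^+\hookrightarrow A^*$, yielding $d^\Delta\circ(t\times E)$, i.e.\ the same iterated dynamics read only on nonempty words. Along the other path, $J$ leaves $d$ untouched, and $(\firstblank)^\mlyExt$ produces exactly the iterated dynamics $d^{(n)}$ on $A^+$. The two descriptions agree on the nose.

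The hard part is matching the \emph{output} maps, where the whole point of $t$ and of the Moore/Mealy shift is concentrated. Along the Mealy path, $J(E,d,s)$ carries the Moore\hspace{0pt}-type output $s\circ\pi_E$, and its Mealy terminal extension evaluates, on a nonempty word $a_1\kons\cdots\kons a_n$, the output of the \emph{last} transition,
\[(s\circ\pi_E)\bigl(a_n,\,d^{(n-1)}(a_1\kons\cdots\kons a_{n-1},e)\bigr)=s\bigl(d^{(n-1)}(a_1\kons\cdots\kons a_{n-1},e)\bigr),\]
the final letter $a_n$ being discarded precisely because $J$ has made the output insensitive to the input. Along the Moore path, the inductive output $s^\Delta$ of \autoref{termex}, whose base case is $s_1=s$, is by construction the value $s\bigl(d^{(n-1)}(a_1\kons\cdots\kons a_{n-1},e)\bigr)$ obtained by running the dynamics on all but the last letter and then applying $s$; restriction along $t$ leaves it unchanged. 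The two outputs thus coincide on every nonempty word, and the square commutes.

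I expect the only genuine subtlety to be bookkeeping this shift correctly: one must resist reading $(\firstblank)^\mreExt$ as ``$s$ of the final state'' and instead use the lagged output dictated by the base case $s_1=s$, which is exactly what is needed for the Mealy extension of $J(E,d,s)$ --- whose output ignores the last letter --- to line up with it after restriction along $t\colon A^+\hookrightarrow A^*$. A slicker but less transparent alternative would invoke \autoref{some_adjoints_btwn_Mly}, identifying $\Mly(t,B)$ with the base\hspace{0pt}-change functor $B^t\circ\firstblank$ and comparing the two sides through the universal properties of the pullback presentations of \autoref{def_mlymre}; I would keep the elementary computation as the primary argument and relegate this reformulation to a remark.
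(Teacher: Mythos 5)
Your proof establishes a different statement from the one in the lemma: the left vertical leg of the left square in \eqref{sq_of_J} is $D_1\colon(E,d,s)\mapsto(E,d,s\circ d)$, not the tautological embedding $J=D_0\colon(E,d,s)\mapsto(E,d,s\circ\pi_E)$ that you compose with throughout. To make the square with $J$ close up, you are then forced to read the Moore terminal extension of \autoref{termex} as ``lagged'', i.e.
\[
s^\Delta(a_1\kons\cdots\kons a_n,e)=s\bigl(d^{(n-1)}(a_1\kons\cdots\kons a_{n-1},e)\bigr),
\]
but this contradicts the displayed definition of $s_n$ in \autoref{termex}, which composes $n$ copies of $d$ --- consuming the \emph{whole} word --- before applying $s$: the extension of a Moore machine outputs ``$s$ of the state reached after the entire input'', which is also what the comparison map to the terminal object $[A^*,B]$ of $\Mre(A,B)$ dictates. (The clause ``$s_1=s$'' you lean on cannot refer to the $A^1$-component for type reasons, since that component has domain $E\times A$ while $s\colon E\to B$; it is the empty-word component, i.e.\ an indexing slip for $s_0=s$.)

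Concretely, under the paper's definitions the two sides of \emph{your} square disagree: on a nonempty word, the path through $(\firstblank)^\mreExt$ and $\Mly(t,B)$ outputs $s\bigl(d^{(n)}(a_1\kons\cdots\kons a_n,e)\bigr)$, while, as you correctly compute, the Mealy extension of $J(E,d,s)$ outputs $s\bigl(d^{(n-1)}(a_1\kons\cdots\kons a_{n-1},e)\bigr)$; already for $E=A=B$, $s=\id$, $d(a,e)=a$ these differ on words of length one ($a_1$ versus $e$). With $D_1$ in place of $J$ the discrepancy vanishes: the Mealy extension of $(E,d,s\circ d)$ applies $s\circ d$ to the pair consisting of the state reached after $a_1\kons\cdots\kons a_{n-1}$ and the last letter $a_n$, which is exactly $s\bigl(d^{(n)}(a_1\kons\cdots\kons a_n,e)\bigr)$. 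Your overall strategy --- both composites are the identity on carriers and on $2$-cells, so it suffices to match dynamics and outputs on objects --- is precisely the ``immediate inspection'' the paper intends, and it goes through verbatim once the left leg is corrected; note also that the right square of \eqref{sq_of_J} is obtained by mating the left one along $D_1\dashv K_1$ and $\Mly(t,B)\dashv(B^t)^*$, which is why it must be $D_1$, and not $D_0$, that appears in the lemma.
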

\begin{proof}
  Immediate inspection.
\end{proof}
\begin{corollary}[Corollary of \autoref{some_adjoints_btwn_Mly}]
  The functor $\Mly(t,B)$ sits in a triple of adjoints $\Mly(t,B) \dashv (B^t)^* \dashv \Pi_t$, where $(B^t)^*$ is given pulling back a morphism $\var{X}{B^{A^+}}$ along $B^t : B^{A^*}\to B^{A^+}$.
\end{corollary}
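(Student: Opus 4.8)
The plan is to read the statement as nothing more than the instance $u=t$ of the adjoint triple already produced in \autoref{some_adjoints_btwn_Mly}. The first thing I would pin down is that the `toList' inclusion $t : A^+\hookrightarrow A^*$ is a genuine morphism of the base $\clK$: granting the countable coproducts that define $A^+=\sum_{n\ge 1}A^n$ and $A^*\cong 1+A^+$, the arrow $t$ is simply the coproduct injection of $A^+$ into $A^*$, and this is essentially the only standing hypothesis the argument actually needs. With $t$ in hand, the whole corollary becomes the specialisation $u=t$, $A=A^+$, $A'=A^*$ of the general discussion preceding \autoref{some_adjoints_btwn_Mly}.

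Next I would unwind the variance and check that the three adjoints land where the statement claims. Exponentiating $t$ gives, by contravariance of $B^{(\firstblank)}$, the restriction map $B^t : B^{A^*}\to B^{A^+}$, so the change-of-base triple \eqref{triple_for_precomp_funs} instantiates to $B^t\circ\firstblank \dashv (B^t)^* \dashv \Pi_t$ between $\clK/B^{A^+}$ and $\clK/B^{A^*}$. By definition the middle functor $(B^t)^* : \clK/B^{A^+}\to\clK/B^{A^*}$ is pullback along $B^t$, sending a morphism $X\to B^{A^+}$ to the vertex of its pullback over $B^{A^*}$; this is precisely the description demanded by the statement. Applying \autoref{fundamental_lemma} exactly as in the derivation of \autoref{some_adjoints_btwn_Mly} transports this triple from the slice categories to the hom-categories $\Mly(A^+,B)$ and $\Mly(A^*,B)$.

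It then remains only to name the leftmost inherited adjoint. This is the identification already carried out in general: after pulling back, the left adjoint $B^u\circ\firstblank$ is the restriction functor $\Mly(u,B)$, and specialising to $u=t$ — together with the preceding lemma, which confirms that $\Mly(t,B)$ is exactly the restriction induced by `toList' — yields the asserted triple $\Mly(t,B)\dashv (B^t)^*\dashv\Pi_t$. I anticipate no real obstacle, since the corollary is a verbatim specialisation; the only points worth double-checking are the bookkeeping of variance (so that $(B^t)^*$ pulls back \emph{into} the slice over $B^{A^*}$ while $\Mly(t,B)$ runs $\Mly(A^*,B)\to\Mly(A^+,B)$, consistently with $\Mly(u,B)$ being the \emph{left} adjoint in \autoref{some_adjoints_btwn_Mly}) and the presence of the coproducts needed to form $A^+$ and $A^*$.
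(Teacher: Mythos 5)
Your proposal is correct and coincides with the paper's intended argument: the corollary is exactly the specialisation $u=t$, $A=A^+$, $A'=A^*$ of the adjoint triple in \autoref{some_adjoints_btwn_Mly}, obtained by instantiating \eqref{triple_for_precomp_funs} and transporting it along \autoref{fundamental_lemma}, with the left adjoint identified as $\Mly(t,B)$ just as in the discussion preceding that remark. Your variance bookkeeping ($(B^t)^*$ landing in the slice over $B^{A^*}$, $\Mly(t,B) : \Mly(A^*,B)\to\Mly(A^+,B)$) matches the paper's conventions, so nothing further is needed.
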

It follows from an easy inspection that $B^t : [A^*,B]\cong B\times [A^+,B]\to [A^+,B]$ coincides with the projection map, and thus we can consider the right square in \eqref{sq_of_J},
obtained mating the left square, and prove at once that
\begin{proposition}
  The left square of \eqref{sq_of_J} is exact, in the sense of \cite{guitart:1980,Guitart1981,Pavlovi1991} or \cite[1.8.9.(b)]{CLTT}. This is to say, $\theta$ in the right diagram \emph{ibid.} is invertible.
\end{proposition}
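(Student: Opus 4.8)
The plan is to spell out what the right diagram and the 2-cell $\theta$ actually are, and then reduce the claim to a one-line computation in $\clK$. The right square of \eqref{sq_of_J} is obtained from the commuting left square by replacing both vertical functors with their right adjoints: $J=\bar\epsilon^{*}$ with its right adjoint $\bar R$ of \autoref{some_adjs}, and $\Mly(t,B)=B^{t}\circ\firstblank$ with $(B^{t})^{*}$ of the preceding corollary. The cell $\theta$ is then the Beck--Chevalley mate $\theta:(\firstblank)^{\mreExt}\circ\bar R\To(B^{t})^{*}\circ(\firstblank)^{\mlyExt}$ of functors $\Mly(A,B)\to\Mly(A^{*},B)$, built by whiskering the unit of $\Mly(t,B)\dashv(B^{t})^{*}$ and the counit of $J\dashv\bar R$ against the identity witnessing commutativity of the left square. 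First I would write this mate out explicitly; exactness is by definition the assertion that $\theta$ is invertible.

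Next I would reduce invertibility of $\theta$ to the base category. A morphism in $\Mly(A^{*},B)$ is invertible as soon as its underlying carrier map in $\clK$ is (a carrier-iso that respects the structure maps has an inverse that does too), so the forgetful functor to $\clK$ is conservative and it suffices to show the underlying family of $\clK$-morphisms of $\theta$ consists of isos. Since each of the four functors and both adjunctions are pulled back from (co)slice categories over $\clK$ through the strict pullbacks \eqref{mly_pb} and \eqref{mre_pb}, \autoref{fundamental_lemma} applied to the output-reindexing adjunctions identifies this underlying family with the base-level mate $\theta_{0}$ of a square of reindexing functors between the slices $\clK/B$, $\clK/B^{A}$, $\clK/B^{A^{*}}$, $\clK/B^{A^{+}}$. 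Thus the whole question collapses to: is $\theta_{0}$ invertible?

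Finally I would invoke the easy inspection recorded just above the statement: under $B^{A^{*}}\cong B\times B^{A^{+}}$ the map $B^{t}$ is the product projection, so $(B^{t})^{*}$ is pullback along a split projection and is computed as $\firstblank\times B$. The base square is then a Beck--Chevalley square for pullback along a projection --- the prototypical exact (indeed comma-type) square --- and $\theta_{0}$ is its canonical, invertible comparison. Concretely, both composites send a machine with state object $E$ to one with state object $\cong E\times B$ (on the right because $(B^{t})^{*}=\firstblank\times B$; on the left because $\bar R$ has the $B\times\firstblank$ shape of the Moorifications, cf.\ $K_{1}=L_{\fku B}$ in \autoref{moorif_right}), and $\theta_{0}$ is the evident swap. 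The one point needing care --- the main, though routine, obstacle --- is matching output maps across the splitting $A^{*}\cong 1+A^{+}$: one must check by induction that reindexing $(\firstblank)^{\mlyExt}$ along $B^{t}$ reproduces the Moore terminal extension on nonempty words, while the reintroduced $B$-factor accounts for the empty-word output (the head map $\h$ of \eqref{pinfty}). Everything else --- the mate construction, the pullback-of-adjunctions reduction, and the conservativity argument --- is formal.
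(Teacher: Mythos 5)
Your overall skeleton --- mate the commuting square, reduce invertibility to carrier maps by conservativity of the forgetful functor, and finish by noting that $B^t$ is a product projection so that $(B^t)^*$ is just $\firstblank\times B$ --- is exactly the fleshing-out of what the paper leaves implicit (its entire ``proof'' is the sentence identifying $B^t:[A^*,B]\cong B\times[A^+,B]\to[A^+,B]$ with the projection, after which the claim is to be proved ``at once''). But your first step contains a genuine error: the left vertical of the left square in \eqref{sq_of_J} is $D_1\colon\inMo Eds\mapsto\inMe Ed{sd}$, not $J=\bar\epsilon^*=D_0\colon\inMo Eds\mapsto\inMe Ed{s\pi_E}$, so $\theta$ is the mate along $D_1\dashv K_1$, not along $J\dashv\bar R$. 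This is not a harmless renaming, because the square with $J$ in place of $D_1$ does \emph{not} commute: on a nonempty word $wa$ the composite $\Mly(t,B)\circ(\firstblank)^\mreExt$ outputs $s(d^\Delta(e,wa))$, the value of $s$ at the state reached after the \emph{whole} word, whereas $(\firstblank)^\mlyExt\circ J$ outputs $s(d^\Delta(e,w))$, since the Mealy output $s\circ\pi_E$ of $J(E,d,s)$ is blind to the last letter; only the extra application of $d$ built into $D_1$ makes the two paths agree. Your mate is constructed by whiskering the commutativity isomorphism of the left square with a unit and a counit, so with your identification there is no isomorphism to whisker and the construction never starts. (The slip is partly excused by the paper's own wobble --- \autoref{moorif_right} labels the Moorification ``$K_1$'' although \autoref{some_adjs} reserves $K_1$ for the right adjoint of $D_1$ --- and your carrier count survives the mix-up, since both $\fku B\ltimes\firstblank$ and $\fkp B\ltimes\firstblank$ have carrier $\cong B\times\firstblank$; still, the statement concerns the square with $D_1$ and $K_1$.)

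Second, your reduction of $\theta$ to a base-level mate $\theta_0$ is not available as stated. \autoref{fundamental_lemma} applies to functors and adjunctions pulled back from the (co)slice level, and the two \emph{vertical} adjunctions are indeed of that kind; but the \emph{horizontal} functors $(\firstblank)^\mreExt$ and $(\firstblank)^\mlyExt$ are not: the output $s^\Delta$ of a terminal extension depends on the dynamics $d$ and not only on $s$, so there is no functor $\clK/B\to\clK/B^{A^*}$ (nor $\clK/B^A\to\clK/B^{A^+}$) lying under them, hence no ``square of reindexing functors between the slices'' whose mate $\theta_0$ you could take. Fortunately this step can be discarded: conservativity of the forgetful functor $\Mly(A^*,B)\to\clK$ (which you justify correctly) already reduces everything to computing the carrier component of $\theta$ at a machine $(F,d,s)$, and that composite --- the unit of $\Mly(t,B)\dashv(B^t)^*$, which pairs a state with the empty-word output, followed by the commutativity isomorphism, followed by $(B^t)^*(\firstblank)^\mlyExt$ applied to the counit of $D_1\dashv K_1$, whose carrier map projects away the $P_\infty B$ factor --- is precisely the canonical isomorphism $P_\infty B\times F\cong F\times B$ of your last paragraph. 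With the adjunction corrected and the middle paragraph replaced by this direct computation, your argument is correct and agrees with the proof the paper intends.
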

Now let $\h : 1\to [A^+,A]$ be the morphism in \eqref{pinfty}: from \autoref{some_adjoints_btwn_Mly} we get a functor $\h^*$ of pullback along $\h$ (with both a left and a right adjoint), and an isomorphism between the Moore machine $\fkp X$ and $(A^t)^*(\h)$.
\section{Future plans}
Among many, we are left with three questions:
\begin{itemize}
	\item What is the filtered colimit of the chain $\aMre[n]\to \aMre[n+1]$, and are these inclusions coreflective as well?
	\item Can we find a representation for \emph{all} $K_n$ in terms of postcomposition with universal 1-cells? In other words, can we find the universal $n$-soft machine, perhaps adapting \autoref{constr_pinfty}?
	\item \autoref{moorif_right} and \autoref{decap_right} characterize $K_0,K_1$ as postcomposition; but then $D_0,D_1$ are \emph{left liftings} in $\Mly$: can we deduce something meaningful from this intrinsic characterization?\footnote{Observe that the existence of right liftings \cite[2.1.3]{coend-calcu} can be ruled out by the fact that postcompositions do not preserve colimits in general.}
\end{itemize}

\printbibliography
\appendix
\section{Diagrams and Proofs}
\subsection{Diagrams}
\[\label{diag1}\vxy{
	\clS_0\times\clS_0\times\clS_0 \ar[d]_-{\otimes\times\clS_0} \ar[r]^-{\clS_0\times\otimes} \drtwocell<\omit>{\alpha} & \clS_0\times\clS_0 \ar[d]^-\otimes \\
	\clS_0\times\clS_0 \ar[r]_-\otimes & \clS_0
	}\]
\def\c{\circ}
\[\label{diag3}\vxy[@C=-9mm]{
		& f\c (g\c (h \c k))) \ar@{->}[rr] \ar@{->}[ld] &  & (f\c g)\c (h \c k) \ar@{->}[rd] &  \\
		f\c ((g\c h)\c k) \ar@{->}[rrd] &  &  &  & ((f\c g)\c h)\c k \\
		&  & (f\c (g\c h))\c k \ar@{->}[rru] &  &
	}\]
	\[\label{the_real_cube}\vxy[@R=3mm@C=3mm]{
		&\Mly(A,B)\ar@<.25em>@{.>}[dl] \ar[rr]\ar[dd]|\hole && A\times\firstblank/B\ar@<.25em>[dl] \ar[dd]\\
		\clM\ar@<.25em>@{.>}[ur]\ar[rr]\ar[dd] && \clK/B\ar[dd] \ar@<.25em>[ur]\\
		& \Alg(A\times\firstblank)\ar@{=}[dl] \ar[rr]|(.525)\hole && \clK\ar@{=}[dl]\\
		\Alg(A\times\firstblank) \ar[rr]&& \clK
	}\kern3em
	\vxy[@R=3mm@C=3mm]{
		&\Mly(A,B)\ar@<.25em>@{.>}[dl] \ar[rr]\ar[dd]|\hole && A\times\firstblank/B\ar@<.25em>[dl] \ar[dd]\\
		\clM\ar@<.25em>@{.>}[ur]\ar[rr]\ar[dd] && \clK/B\ar[dd] \ar@<.25em>[ur]\\
		& \Alg(A\times\firstblank)\ar@<.25em>[dl]\ar@{<-}@<-.25em>[dl] \ar[rr]|(.525)\hole && \clK\ar@<.25em>[dl]\ar@{<-}@<-.25em>[dl]\\
		\Alg(A\times\firstblank) \ar[rr]&& \clK
	}\]
	\[\label{sq_of_J}
    \vxy{
      \Mre(A,B) \ar[r]^{(\firstblank)^\mreExt}\ar[d]_{D_1}& \Mly(A^*,B)\ar[d]^{\Mly(t,B)}\\
      \Mly(A,B) \ar[r]_{(\firstblank)^\mlyExt}& \Mly(A^+,B) \\
    }\qquad\qquad 
	\vxy{
		\Mre(A,B) \ar[r]^{(\firstblank)^\mreExt}\ar@{<-}[d]_{K_1}& \Mly(A^*,B)\ar@{<-}[d]^{(B^t)^*}\dltwocell<\omit>{\theta}\\
		\Mly(A,B) \ar[r]_{(\firstblank)^\mlyExt}& \Mly(A^+,B) \\
	  }\]
\subsection{Proofs}
This is essentially the argument that appears in passing in \cite{Lu_2018}, extended from \emph{strict} semigroupal/monoidal categories to \emph{non-strict} ones.
\begin{construction}\label{constr_unitize}
	Let $\clS$ be a semigroupal category with underlying category $\clS_0$ and equipped with a bifunctor $\_\otimes\_ : \clS_0\times\clS_0 \to \clS_0$
	satisfying the associativity axiom. We define $\clS^\un$ as follows:
	\begin{itemize}
		\item the underlying category $(\clS^\un)_0$ is the coproduct $\clS_0 + \bf 1$, where $\bf 1$ is the terminal category with unique object $\perp$;
		\item the `extended' multiplication functor
		      \[\vxy{\_\otimes^\un\_ : (\clS_0+{\bf 1})\times(\clS_0 +{\bf 1}) \cong (\clS_0\times\clS_0) + (\clS_0\times {\bf 1}) + ({\bf 1}\times\clS_0) + ({\bf 1}\times {\bf 1}) \ar[r]& \clS_0+{\bf 1}}\]
		      is defined piecewise as $S\otimes^\un S' = S\otimes S'$ if $S,S'\in\clS_0$ and $\perp\otimes^\un S' = S'$, $\perp\otimes^\un \perp=\perp$.
		      (Notice in particular that in case $\clS$ had a
		      monoidal unit, $\perp$ `replaced' it: we have added $\perp$ as a free unit.) On morphisms we follow a similar strategy; there is only an identity in $\bf 1$, and no morphism $\perp \leftrightarrows S$, so we just have to define $f\otimes^\un g := f\otimes g$ and $\id_\perp \otimes^\un \id_\perp = \id_\perp$, $f\otimes^\un \id_\perp=f$, $\id_\perp\otimes^\un g$ are forced to be $f$ and $g$ respectively, if we want that $\otimes^\un$ sends identities to identities, and that is is bifunctorial. Thus, $\perp\otimes^\un\_$ and $\_\otimes^\un\perp$ are \emph{strictly} the identity functors of the category $(\clS^\un)_0$.
	\end{itemize}
\end{construction}
\begin{proof}[Proof that the functor in \autoref{unitize_thm} is a left adjoint]\label{proof_of_unitize_thm}
	First of all let's specify the last (straightforward) piece of structure needed to define the unitization.
	\begin{itemize}
		\item The associator has components $\alpha_{XYZ}$ on objects of $\clS_0$, and when either $X,Y$ or $Z$ is $\perp$ we define it to be the appropriate identity morphism of the tensor of the remaining two objects. Naturality of $\alpha$ is ensured by these choices. This also ensures that the pentagon axiom for 4 objects $X,Y,Z,W$ trivially holds either because $X,Y,Z,W$ are all in $\clS_0$ (and thus the pentagon axiom is true in the semigroupal structure of $\clS_0$) or because the pentagon is made of identities.
		\item The last bit of structure that we have to assess is the `triangle axiom' for a monoidal structure: the triangle
		      \[\vxy{
			      X \otimes^\un (\perp\otimes^\un Y) \ar@{->}[rr]^{\alpha_{X\perp Y}} \ar@{->}[rd] &  & (X \otimes^\un \perp)\otimes^\un Y \ar@{->}[ld] \\
			      & X\otimes^\un Y &
			      }\]
		      must commute as it is composed by identities, and reasoning with a similarly straightforward case analysis, we conclude it does.
	\end{itemize}
	We have exhibited a construction for the unitization of a semigroupal category $\clS$; we still have to prove that this is a functor (but this is obvious: each semigroupal functor $H : \clS\to \clS'$ induces a monoidal functor coinciding with $H$ on $\clS_0$ and sending $\perp_\clS$ to $\perp_{\clS'}$), and its universal property. For the latter, we have to check that there is an isomorphism
	\[\SgCat(\clS,\forg \clM)\cong\MonCat(\clS^\un,\clM)\]
	between the category of strong semigroupal functors $\clS\to \forg \clM$ into a monoidal category, and the category of strong monoidal functors $\clS^\un\to\clM$, when both categories are equipped with semigroupal and monoidal natural transformations respectively.

	At the level of the underlying categories, $(\_)^\un$ acts as the `Maybe' functor, and $\clM_0$ is a pointed object in $\Cat$ (by the monoidal unit, $I_\clM : {\bf 1}\to\clM$) so that $H : \clS\to \forg \clM$ induces a unique functor $\hat H : \clS_0 + {\bf 1}\to\clM_0$; this functor is now preserving the tensor product on $\clS_0$ (because $H$ was semigroupal to start with), and it is forced by the universal property of `Maybe' to send $\perp$ into $I_\clM$, so strictly speaking it becomes a \emph{normal} strong monoidal functor (=strictly preserving the identity).
\end{proof}
\begin{proof}[Proof of \autoref{unitize_thm}]\label{proof_of_thm_freebicat}
	Let $\bfS=(\bfS_0,\_\circ\_)$ be a semibicategory as in \autoref{def_semibicat}; define $\free\bfS$ as follows:
	\begin{itemize}
		\item $(\free\bfS)_0$ is the same class of objects of $\bfS$;
		\item for each $X\in\bfS_0$, we apply to the category $\bfS(X,X)$ the construction of \ref{constr_unitize}; thus, each semigroupal category $\bfS(X,X)$ is unitized into $\bfS(X,X)^\un$.
		\item the composition functors are defined as in $\bfS$ when $X,Y,Z$ are such that neither $Y=Z$ or $X=Y$, which means that in such case we take the same
		      \[\_\circ_{XYZ}\_ : \xymatrix{\bfS(Y,Z)\times\bfS(X,Y)\ar[r] & \bfS(X,Z)}\]
		      as composition maps. When $X=Y$ instead we define
		      \[\_\circ_{\un,XXZ}\_ : \xymatrix{
				      \bfS(X,Z)\times(\bfS(X,X)+{\bf 1})\cong
				      \bfS(X,Z)\times \bfS(X,X)+\bfS(Y,Z)\times {\bf 1}
				      \ar[r] & \bfS(X,Z)}\]
		      via the coproduct map of $\_\circ_{XXZ}\_$ and the right unitor of $(\Cat,\times)$. We act similarly to define
		      \[\_\circ_{\un,XYY}\_ : \xymatrix{
			      (\bfS(Y,Y) + {\bf 1})\times\bfS(X,Y)
			      \cong
			      \bfS(Y,Y)\times\bfS(X,Y) + {\bf 1}\times\bfS(X,Y)
			      \ar[r] & \bfS(X,Y)}\]
		      using the coproduct map of $\_\circ_{XYY}\_$ and the left unitor of $(\Cat,\times)$.

		      Clearly, all $\_\circ_{XYZ}\_$ so defined are functors.
		\item The components of the associator $\alpha^\un$ when either $f,g$ or $h$ is the `dummy identity' in a $\bfS(X,X)$ reduce to the identity 2-cells of the remaining two objects, considering how we defined the composition operation; on all other components, $\alpha^\un = \alpha$. Again, these choices ensure the naturality of $\alpha^\un$.
		\item Clearly, any strong semigroupal functor $H:\bfS \to \bfT$ induces a strong monoidal unctor $H^\un : \bfS^\un \to \bfT^\un$, that coincides with $H$ on $\bfS\hookrightarrow \bfS^\un$.
	\end{itemize}
	It remains to show the universal property. A pseudofunctor $H : \bfS \to \bfB$ from a semibicategory to a bicategory must induce a pseudofunctor $\bfS^\un\to\bfT$: at the level of objects nothing has changed; each hom-functor
	\[\vxy{
			H_{XY} : \bfS(X,Y) \ar[r] & \bsB(HX,HY)
		}\]
	with $X\ne Y$ is also unchanged, and since each $\bfB(HX,HX)$ is a monoidal category (hence, in particular, a pointed category) the functor $H$ extends to $H_{XX}^\un : \bfS(X,X)^\un\to\bfB(HX,HX)$; it is easily seen that this extension is to a strong monoidal functor.
\end{proof}
\end{document}